\newtheorem{lemma}{Lemma}[section]
\newtheorem{thm}{Theorem}[section]
\newtheorem{corollary}{Corollary}[section]
\def\Var{\textsf{Var}} 
\def\text#1{\mbox{\rm #1}}
\def\T{\mathcal{T}}
\def\T{{ \mathrm{\scriptscriptstyle T} }}
\def\H{{ \mathrm{\scriptscriptstyle H} }}
\def\n{{ \mathcal{N} }}
\def\cn{{ \mathcal{CN} }}
\newcommand{\argmin}{\mathop{\rm argmin}}
\newcommand{\argmax}{\mathop{\rm argmax}}
\newcommand{\norm}[1]{\left\|{#1} \right\|}
\newcommand{\wh}{\widehat}
\newcommand{\wt}{\widetilde}
\newcommand{\fnorm}[1]{\|#1\|_{\rm F}}
\newcommand{\opnorm}[1]{\|#1\|_{\rm op}}
\newcommand{\Tr}{\mathop{\sf Tr}}
\newcommand{\diag}{\mathop{\text{diag}}}
\newtheorem*{condb'}{Condition B'}
\newcommand{\br}[1]{\left( #1 \right)}
\newcommand{\sbr}[1]{\left[ #1 \right]}
\newcommand{\cbr}[1]{\left\{ #1 \right\}}
\newcommand{\ebr}[1]{\exp\left( #1 \right)}
\newcommand{\mathr}{\mathbb{R}}
\newcommand{\abs}[1]{\left| #1 \right|}
\newcommand{\E}{\mathbb{E}}
\newcommand{\im}{{\rm Im}}
\newcommand{\re}{{\rm Re}}
\newcommand{\diff}{{\rm d}}
\title{Exact Minimax Estimation for Phase Synchronization
}
\author[1]{Chao Gao}
\author[2]{Anderson Y. Zhang}
\affil[1]{
University of Chicago
}
\affil[2]{
University of Pennsylvania
}
\begin{document}
\maketitle

\begin{abstract}
We study the phase synchronization problem with measurements $Y=z^*z^{*\H}+\sigma W\in\mathbb{C}^{n\times n}$, where $z^*$ is an $n$-dimensional complex unit-modulus vector and $W$ is a complex-valued Gaussian random matrix. It is assumed that each entry $Y_{jk}$ is observed with probability $p$. We prove that the minimax lower bound of estimating $z^*$ under the squared $\ell_2$ loss is $(1-o(1))\frac{\sigma^2}{2p}$. We also show that both generalized power method and maximum likelihood estimator achieve the error bound $(1+o(1))\frac{\sigma^2}{2p}$. Thus, $\frac{\sigma^2}{2p}$ is the exact asymptotic minimax error of the problem. Our upper bound analysis involves a precise characterization of the statistical property of the power iteration. The lower bound is derived through an application of van Trees' inequality.

\smallskip

\end{abstract}


\section{Introduction}

The phase synchronization problem \citep{singer2011angular} is to estimate $n$ unknown angles $\theta_1^*,\cdots,\theta_n^*$ from noisy measurements of $(\theta_j^*-\theta_k^*)$ mod $2\pi$. In this paper, we consider the following additive model \cite{abbe2017group, gao2020multi, zhong2018near, ling2020near}:
\begin{equation}
Y_{jk}=z_j^*\bar{z}_k^*+\sigma W_{jk}\in\mathbb{C}, \label{eq:basic-model}
\end{equation}
for all $1\leq j<k\leq n$, where we use the notation $\bar{x}$ for the complex conjugate of $x$. We assume that each $z_j^*\in\mathbb{C}_1=\{x\in\mathbb{C}:|x|=1\}$ and we can thus write it as $z_j^*=e^{i\theta_j^*}$. The additive noise $W_{jk}$ in (\ref{eq:basic-model}) is assumed to be i.i.d. standard complex Gaussian.\footnote{For $W_{jk}\sim \cn(0,1)$, we have $\re(W_{jk})\sim \n\left(0,\frac{1}{2}\right)$ and $\im(W_{jk})\sim \n\left(0,\frac{1}{2}\right)$ independently.} Our goal in this paper is to study minimax optimal estimation of the vector $z^*\in\mathbb{C}_1^n$ under the loss function
\begin{equation}
\ell(\wh{z},z^*)=\min_{a\in\mathbb{C}_1}\sum_{j=1}^n|\wh{z}_j a-z_j^*|^2. \label{eq:loss}
\end{equation}
We remark that the minimization over a global phase in the definition of (\ref{eq:loss}) is necessary. This is because the global phase is not identifiable from the pairwise observations (\ref{eq:basic-model}).

Various estimation procedures have been considered and studied in the literature. For example, the maximum likelihood estimator (MLE) is defined as the global maximizer of the following constrained optimization problem
\begin{equation}
\max_{z\in\mathbb{C}_1^n}z^{\H}Yz,\label{eq:MLE-full-graph}
\end{equation}
where $Y$ is Hermitian with $Y_{jk}=\bar Y_{kj}$ for all $1\leq k <j \leq n$ and $Y_{jj}=0$ for  all $j\in[n]$. Note that  (\ref{eq:MLE-full-graph}) can be shown to be  equivalent to $\min_{z\in\mathbb{C}_1^n} \sum_{1\leq j<k\leq n} \abs{Y_{jk} - z_j\bar z_k}^2$.
It was shown in \cite{bandeira2017tightness} that the MLE satisfies $\ell(\wh{z},z^*)\leq C\sigma^2$ with high probability for some constant $C>0$. However, the optimization (\ref{eq:MLE-full-graph}) is nonconvex and computationally infeasible in general. To address this problem, generalized power method (GPM) \cite{boumal2016nonconvex, filbir2020recovery, perry2018message} and semi-definite programming (SDP) have been considered in the literature to approximate the solution of (\ref{eq:MLE-full-graph}). The generalized power method is defined through the iteration,\footnote{When the denominator of (\ref{eq:GPM-full-graph}) is zero, take $z_j^{(t)}$ to be an arbitrary value in $\mathbb{C}_1$.}
\begin{equation}
z_j^{(t)} = \frac{\sum_{k\in[n]\backslash\{j\}}Y_{jk}z_k^{(t-1)}}{\left|\sum_{k\in[n]\backslash\{j\}}Y_{jk}z_k^{(t-1)}\right|}. \label{eq:GPM-full-graph}
\end{equation}
In other words, one repeatedly computes the product $Yz^{(t-1)}$ and projects this vector to $\mathbb{C}_1^n$ through entrywise normalization. When the iteration (\ref{eq:GPM-full-graph}) is initialized by the eigenvector method, 
\cite{zhong2018near} shows that $z^{(t)}$ converges to the global maximizer of (\ref{eq:MLE-full-graph}) at a linear rate under the noise level condition 
$\sigma^2=O\left(\frac{n}{\log n}\right)$. For its statistical performance, \cite{liu2017estimation} shows $\ell(z^{(\infty)},z^*)\leq C\sigma^2$ with high probability for some constant $C>0$.
The semidefinite programming is a convex relaxation of (\ref{eq:MLE-full-graph}). It refers to the following optimization problem,
\begin{equation}
\max_{Z=Z^{\H}\in\mathbb{R}^{n\times n}}\Tr(YZ)\quad\text{subject to }\diag(Z)=I_n\text{ and }Z \succeq 0. \label{eq:SDP}
\end{equation}
In general, the solution of (\ref{eq:SDP}) is an $n\times n$ matrix and needs to be rounded. When $\sigma^2=O(n^{1/2})$, it was proved by \cite{bandeira2017tightness} that the solution to (\ref{eq:SDP}) is a rank-one matrix $\wh{Z}=\wh{z}\wh{z}^{\H}$, with $\wh{z}$ being a global maximizer of (\ref{eq:MLE-full-graph}). This result was recently proved by \cite{zhong2018near} to hold under a weaker condition $\sigma^2=O\left(\frac{n}{\log n}\right)$. Given the fact that SDP solves (\ref{eq:MLE-full-graph}), we know that it also achieves the same high-probability error bound $\ell(\wh{z},z^*)\leq C\sigma^2$ as that of the MLE under the additional condition $\sigma^2=O\left(\frac{n}{\log n}\right)$.

Despite these estimation procedures studied in the literature, it remains an open problem what the optimal error under the loss (\ref{eq:loss}) is. In this paper, we establish a minimax lower bound for phase synchronization. We show that
\begin{equation}
\inf_{\wh{z}\in\mathbb{C}_1^n}\sup_{z\in\mathbb{C}_1^n}\mathbb{E}_z\ell(\wh{z},z)\geq (1-\delta)\frac{\sigma^2}{2}, \label{eq:intro-lower-full}
\end{equation}
for some $\delta=o(1)$ under the condition that $\sigma^2=o(n)$. 
This provides a stronger characterization of the fundamental limits of the phase synchronization problem than the Cram\'er-Rao lower bound developed in  \cite{boumal2013intrinsic, boumal2014cramer}, which only holds for unbiased estimators. Instead, the lower bound in (\ref{eq:intro-lower-full}) holds for both unbiased and biased estimators.

Moreover, in this paper, we prove both the MLE and the GPM achieve the error bound
\begin{equation}
\ell(\wh{z},z^*)\leq (1+\delta)\frac{\sigma^2}{2}, \label{eq:intro-upper-full}
\end{equation}
for some $\delta=o(1)$ with high probability under the same condition $\sigma^2=o(n)$. In other words, these two estimators are not only rate-optimal, but are also exactly asymptotically minimax by achieving the correct leading constant in front of the optimal rate. In addition, since we know by the result of \cite{zhong2018near} that the solution of the SDP is a rank-one matrix $\wh{z}\wh{z}^{\H}$ with $\wh{z}$ being the MLE, the SDP also achieves the optimal error bound (\ref{eq:intro-upper-full}) as a direct consequence, but under a stronger condition $\sigma^2=O\left(\frac{n}{\log n}\right)$.

To formally state our main result, we introduce a more general statistical estimation setting that allows the possibility of missing entries. Instead of observing $Y_{jk}$ for all $1\leq j<k\leq n$, we assume each $Y_{jk}$ is observed with probability $p$. In other words, consider a random graph $A_{jk}\sim \text{Bernoulli}(p)$ independently for all $1\leq j<k\leq n$, and we only observe $Y_{jk}$ that follows (\ref{eq:basic-model}) when $A_{jk}=1$.  Define $A_{jk} = A_{kj}$ for $1\leq k<j\leq n$ and $A_{jj}=0$ for $j\in[n]$.
The full observations can be organized into two Hermitian matrices $A$ and $A\circ Y$, where $\circ$ denotes the matrix Hadamard product. The MLE and the GPM can be extended by replacing $Y_{jk}$ in (\ref{eq:MLE-full-graph}) and (\ref{eq:GPM-full-graph}) with $A_{jk}Y_{jk}$.

\begin{thm}\label{thm:intro}
Assume $\sigma^2=o(np)$ and $\frac{np}{\log n}\rightarrow\infty$. Then, there exists some $\delta=o(1)$ such that
\begin{align}
\inf_{\wh{z}\in\mathbb{C}_1^n}\sup_{z\in\mathbb{C}_1^n}\mathbb{E}_z\ell(\wh{z},z)\geq (1-\delta)\frac{\sigma^2}{2p}.\label{eqn:intro_1}
\end{align}
Moreover, both MLE and GPM achieve the error bound
\begin{align}
\ell(\wh{z},z^*)\leq (1+\delta)\frac{\sigma^2}{2p},\label{eqn:intro_2}
\end{align}
with probability at least $1-n^{-1}-\exp\left(-\left(\frac{np}{\sigma^2}\right)^{1/4}\right)$.
\end{thm}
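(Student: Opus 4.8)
The plan for \eqref{eqn:intro_1} is to reduce to a quadratic estimation problem for the angles and then apply the multivariate van Trees inequality. Write $z_j^*=e^{i\theta_j^*}$ and $P=I_n-\tfrac1nJ_n$ (with $J_n$ the all‑ones matrix). A one‑line computation gives $\ell(\wh z,z^*)=2n-2\bigl|\sum_j\overline{\wh z_j}z_j^*\bigr|$; since $\sigma^2/p=o(n)$, we may restrict attention (via a truncation/conditioning argument) to the regime where $\ell(\wh z,z^*)=o(n)$, in which a single global rotation brings all angular errors into the range where $4\sin^2(x/2)=(1+o(1))x^2$, so that $\ell(\wh z,z^*)=(1+o(1))\|P(\wh\theta-\theta^*)\|^2$ for a suitable choice of angle representatives. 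Thus it suffices to lower bound $\E\|P(\wh\theta-\theta^*)\|^2$. I would put an i.i.d.\ prior $\theta_j^*\sim\lambda$ with $\lambda$ a fixed smooth bump density rescaled to width $t_n\to0$; the rescaling keeps the loss localized (validating the display above) while its Fisher information is $\mathcal I(\lambda)\asymp t_n^{-2}$. Conditionally on $A$, the Fisher information matrix of \eqref{eq:basic-model} for $\theta^*$ is exactly $\tfrac2{\sigma^2}L_A$, the graph Laplacian of $A$, whose kernel is $\Span(\mathbf 1)$ — precisely the unidentifiable global phase. Applying van Trees along each direction $v_m$ of an orthonormal basis of $\mathbf 1^\perp$, summing over $m=1,\dots,n-1$, and using Jensen in $A$ together with $\E[L_A]=p(nI_n-J_n)$ (so $v_m^\T\E[L_A]v_m=np$), I get
\[
\E\|P(\wh\theta-\theta^*)\|^2\ \ge\ \sum_{m=1}^{n-1}\frac{1}{\tfrac{2np}{\sigma^2}+\mathcal I(\lambda)}\ =\ \frac{(n-1)\sigma^2}{2np+\sigma^2\mathcal I(\lambda)}\ =\ (1-o(1))\frac{\sigma^2}{2p},
\]
where the last step uses $\sigma^2\mathcal I(\lambda)=o(np)$, which holds for $t_n\gg(\sigma^2/np)^{1/2}$ — possible since $\sigma^2=o(np)$, e.g.\ $t_n=(\sigma^2/np)^{1/4}$.

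\textbf{Upper bound: reduction and a rough bound.} For \eqref{eqn:intro_2} I would handle the MLE and the GPM together. Writing $M=A\circ Y$, the first‑order optimality conditions for $\max_{z\in\mathbb C_1^n}z^\H Mz$ force $\wh z_j=(M\wh z)_j/|(M\wh z)_j|$, so the MLE is a fixed point of the GPM map $f$ of \eqref{eq:GPM-full-graph}; it therefore suffices to analyze a single step $z^{(t)}=f(z^{(t-1)})$ and then either run it along the GPM trajectory (started from the spectral estimator) or set $z^{(t-1)}=z^{(t)}=\wh z$ (for the MLE). The argument proceeds in two stages. First a \emph{rough} stage: Davis--Kahan applied to $M$ versus $\E M=pz^*z^{*\H}-pI_n$, with $\opnorm{M-\E M}=O(\sqrt{np}(1+\sigma))$ and eigengap $\asymp np$, together with an entrywise (leave‑one‑out) analysis of the top eigenvector, gives for the normalized spectral estimator both an $\ell_2$ bound $\ell=O((1+\sigma^2)/p)$ and an $\ell_\infty$ bound $\max_k|z_k^{(0)}-z_k^*|=o(1)$; a coarse version of the one‑step estimate below propagates (and, when $\sigma\to0$, improves) this to $\ell(z^{(t)},z^*)=O(\sigma^2/p)$ with $\ell_\infty$ control. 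This, together with $np/\log n\to\infty$ — which yields $\max_j\sum_kA_{jk}=(1+o(1))np$ and the concentration of the Gaussian quadratic forms that appear — is the input to the refined stage.

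\textbf{Upper bound: the sharp one‑step analysis.} By equivariance of $f$ under a global phase I may assume $z^{(t-1)}$ has been aligned so that $\|z^{(t-1)}-z^*\|^2=\ell(z^{(t-1)},z^*)=:\epsilon^2$. Decompose
\[
(Mz^{(t-1)})_j=c_j\,z_j^*+\sigma g_j,\qquad c_j:=\sum_{k\ne j}A_{jk}\bar z_k^*z_k^{(t-1)},\quad g_j:=\sum_{k\ne j}A_{jk}W_{jk}z_k^{(t-1)}.
\]
Using the $\ell_\infty$ control one gets $\re c_j=(1+o(1))np$ while $|\im c_j|$ and $\sigma|g_j|=O(\sigma\sqrt{np})$ are lower order, so a second‑order expansion of the normalization gives $|z_j^{(t)}-z_j^*|^2=(1+o(1))\bigl(\im c_j+\sigma\im(\bar z_j^*g_j)\bigr)^2/(np)^2$. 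The leading piece is the \emph{oracle Gaussian} term. To control the dependence of $z^{(t-1)}$ on the noise row $W_{j\cdot}$ I would introduce the leave‑one‑out iterate $z^{(t-1,j)}$ (run $f$ on $M$ with its $j$‑th row and column deleted), which is independent of $W_{j\cdot}$: conditionally, $\sigma\im\bigl(\bar z_j^*\sum_{k\ne j}A_{jk}W_{jk}z_k^{(t-1,j)}\bigr)\sim\n\bigl(0,\tfrac{\sigma^2}{2}\sum_{k\ne j}A_{jk}\bigr)$, so summing over $j$ and using $\chi^2$‑concentration,
\[
\sum_{j=1}^n\frac{\sigma^2\bigl(\im(\bar z_j^*\sum_{k\ne j}A_{jk}W_{jk}z_k^{(t-1,j)})\bigr)^2}{(np)^2}=(1+o(1))\sum_{j=1}^n\frac{\tfrac{\sigma^2}{2}\sum_{k\ne j}A_{jk}}{(np)^2}=(1+o(1))\frac{\sigma^2}{2p}.
\]
It then remains to show the other contributions are $o(\sigma^2/p)$: (i) the self‑feedback $\sum_j(\im c_j)^2/(np)^2$ — here the key observation is that at step $t-1$ the error $\bar z_k^*(z_k^{(t-1)}-z_k^*)$ is, to second order, purely imaginary and noise‑driven, of order $\sigma/\sqrt{np}$, which forces $\im c_j=O(\sigma)$ uniformly in $j$, whence this term is $O(\sigma^2/(np^2))=o(\sigma^2/p)$; (ii) the leave‑one‑out discrepancy, which Cauchy--Schwarz and $\|A_{j\cdot}\circ W_{j\cdot}\|_2=O(\sqrt{np})$ bound by $O(\sigma\sqrt{np}\,\|z^{(t-1)}-z^{(t-1,j)}\|_2)$, combined with a nested induction on $t$ giving $\sum_j\|z^{(t-1)}-z^{(t-1,j)}\|_2^2=o(n)$; and (iii) cross terms and the Taylor remainder, by Cauchy--Schwarz against the already‑controlled pieces. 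Summing over $j$ yields $\ell(z^{(t)},z^*)\le(1+o(1))\frac{\sigma^2}{2p}$ for every $t$ past the rough stage, and — taking $z^{(t-1)}=z^{(t)}=\wh z$ — the same bound for the MLE; tracking the concentration events gives the stated failure probability $n^{-1}+\exp(-(np/\sigma^2)^{1/4})$.

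\textbf{Main obstacle.} The hard part is the refined stage: extracting the exact constant $\tfrac12$ forces one to expand the power iteration to first order and to show that \emph{every} lower‑order term is negligible, uniformly over all $n$ coordinates and all iterations. The two genuinely delicate ingredients are (a) the leave‑one‑out decoupling of $z^{(t-1)}$ from $W_{j\cdot}$, whose error must itself be propagated through the iteration by a nested induction, and (b) the self‑feedback term $\im c_j$, for which a naïve norm bound only gives a contraction factor $1+o(1)$ rather than $<1$ — one must instead use that the previous error is (to second order) orthogonal to the signal and noise‑driven, so that the feedback is genuinely of order smaller than $\sigma^2/p$ rather than merely a fixed fraction of $\epsilon^2$. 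Securing both with room to spare is exactly where the hypotheses $\sigma^2=o(np)$ and $np/\log n\to\infty$ are spent; the lower bound, in contrast, is a comparatively clean application of van Trees once the loss has been linearized in the angles.
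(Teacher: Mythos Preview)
Both halves of your plan depart from the paper's arguments; your upper-bound route works considerably harder than needed, and your lower-bound linearization has a direction issue that you gloss over.

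\textbf{Lower bound.} The paper does not linearize in the angles. It instead uses the elementary inequality $\ell(\hat z,z)\ge\frac{1}{2n}\|\hat z\hat z^{\H}-zz^{\H}\|_{\rm F}^2=\frac{1}{2n}\sum_{j\ne k}|\hat z_j\bar{\hat z}_k-z_j\bar z_k|^2$ (Lemma~\ref{lem:loss-equiv}), which is separable and free of the global phase, and then applies van Trees to each two-dimensional subproblem of estimating $z_j\bar z_k$ with $(z_l)_{l\ne j,k}$ known (Lemma~\ref{lem:lower-van-trees}). Your Laplacian/van Trees approach is a natural alternative and arguably more structural, but note that $4\sin^2(x/2)\le x^2$ goes the wrong way: for a lower bound you need $\ell(\hat z,z)\ge(1-o(1))\|P(\hat\theta-\theta)\|^2$, which requires forcing the estimator itself into the shrinking arc (e.g.\ by projection, which can only decrease the loss on the prior's support). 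This is fixable but is a real step you only gesture at.

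\textbf{Upper bound.} The paper needs neither leave-one-out nor $\ell_\infty$ control. Its key Lemma~\ref{lem:key} is a \emph{uniform} statement: on a single high-probability event, \emph{every} $z\in\mathbb{C}_1^n$ with $\ell(z,z^*)\le\gamma n$ satisfies $\ell(f(z),z^*)\le\delta_1\,\ell(z,z^*)+(1+\delta_2)\frac{\sigma^2}{2p}$ with $\delta_1,\delta_2=o(1)$. The decoupling you achieve via leave-one-out is replaced by the trivial split $\sum_{k}A_{jk}W_{jk}z_k=\sum_{k}A_{jk}W_{jk}z_k^*+\sum_{k}A_{jk}W_{jk}(z_k-z_k^*)$: the first piece is the oracle noise (independent of the iterate) whose imaginary part has variance exactly $\tfrac{\sigma^2}{2}\sum_kA_{jk}$ and produces the $\frac{\sigma^2}{2p}$; the second is controlled in $\ell_2$ by $\|A\circ W\|_{\rm op}\,\|z-z^*\|$, contributing $O(\tfrac{\sigma^2}{np})\ell(z,z^*)$. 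Your ``self-feedback'' $\im c_j$ is handled not by invoking any noise-driven structure of the previous iterate, but by absorbing the constant part $\frac{1}{n-1}\sum_k\bar z_k^*(z_k-z_k^*)$ into a new global phase $a$ and bounding the residual by $\|A-\mathbb{E}A\|_{\rm op}\,\|z-z^*\|$; this already gives a genuine contraction factor $\delta_1=O\bigl((\tfrac{\log n+\sigma^2}{np})^{1/2}\bigr)$, so your stated main obstacle (that a na\"ive norm bound yields only $1+o(1)$) is a misconception. Because the lemma is uniform in $z$, the MLE needs no separate analysis: $\hat z=f(\hat z)$ plus a crude $\ell_2$ bound (Lemma~\ref{lem:MLE-ini}) give $\ell(\hat z,z^*)\le\frac{1+\delta_2}{1-\delta_1}\frac{\sigma^2}{2p}$ by rearrangement.
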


Theorem \ref{thm:intro} immediately implies (\ref{eq:intro-lower-full}) and (\ref{eq:intro-upper-full}) as a special case of $p=1$, and is the first statistical analysis of phase synchronization for $p<1$. We remark that both conditions $\sigma^2=o(np)$ and $\frac{np}{\log n}\rightarrow\infty$ are essential for the results of the above theorem to hold. Since the minimax risk of the problem is $\frac{\sigma^2}{2p}$, the condition $\sigma^2=o(np)$, which is equivalent to $\frac{\sigma^2}{2p}=o(n)$, guarantees that the minimax risk is of smaller order than the trivial one. The order $n$ is trivial, since $\ell(z,z^*)\leq 4n$ for any $z,z^*\in\mathbb{C}_1^n$. When $p=1$, the necessity of $\sigma^2=o(n)$ for a nontrivial recovery is understood in the literature \citep{perry2016optimality,javanmard2016phase,zhong2018near,lelarge2019fundamental}. The condition $\frac{np}{\log n}\rightarrow\infty$ guarantees that the random graph $A$ is connected with high probability. It is known that when $p\leq c\frac{\log n}{n}$ for some sufficiently small constant $c>0$, the random graph has several disjoint components, which makes the recovery of $z^*$ up to a global phase impossible. 

Since $\ell(\wh{z},z)\leq 4n$ and $\exp\left(-\left(\frac{np}{\sigma^2}\right)^{1/4}\right)=o\left(\frac{\sigma^2}{np}\right)$, the MLE and the GPM also satisfy the in-expectation bound
$$\sup_{z\in \mathbb{C}_1^n}\mathbb{E}_z\ell(\wh{z},z)\leq (1+\delta)\frac{\sigma^2}{2p} + 4,$$
for some $\delta=o(1)$.
To understand how small the extra term $4$ is, this is the error achieved by an estimator that exactly recovers $n-1$ entries of $z$ but only makes mistake on one entry. For example, consider $\wh{z}_1=-z_1^*$ and $\wh{z}_j=z_j^*$ for all $j=2,\cdots,n$. This gives $\ell(\wh{z},z^*)=(1-o(1))4$. If we further assume that $p=o(\sigma^2)$, we will have
$$(1-\delta)\frac{\sigma^2}{2p}\leq \inf_{\wh{z}\in\mathbb{C}_1^n}\sup_{z\in\mathbb{C}_1^n}\mathbb{E}_z\ell(\wh{z},z)\leq (1+\delta)\frac{\sigma^2}{2p},$$
for some $\delta=o(1)$. Thus, $\frac{\sigma^2}{2p}$ is the exact asymptotic minimax risk for phase synchronization.

Our analysis of the upper bounds relies on a precise statistical characterization of the power iteration map $f:\mathbb{C}_1^n\rightarrow\mathbb{C}_1^n$. Let $f$ be the map that characterizes the iteration of the GPM. That is, $z^{(t)}=f(z^{(t-1)})$. We show that as long as $z\in\mathbb{C}_1^n$ satisfies $\ell(z,z^*)=o(n)$, the vector $f(z)$ must satisfy
\begin{equation}
\ell(f(z),z^*) \leq \delta \ell(z,z^*) + (1+\delta)\frac{\sigma^2}{2p}, \label{eq:critical-lem}
\end{equation}
for some $\delta=o(1)$ with high probability. To be more precise, we prove that the inequality (\ref{eq:critical-lem}) holds uniformly over all $z\in\mathbb{C}_1^n$ such that $\ell(z,z^*)\leq \gamma n$ for some $\gamma=o(1)$ with high probability.  The bound (\ref{eq:critical-lem}) immediately leads to the optimality of the GPM. This direct analysis of the power iteration is very different from what has been done in the literature. In the literature, the statistical error bound of the GPM is derived through its convergence to the MLE \citep{boumal2016nonconvex,zhong2018near}, but that requires a stronger condition $\sigma^2=O\left(\frac{n}{\log n}\right)$ at least when $p=1$. In contrast, our analysis of GPM is not based on its relation to the MLE. On the opposite, we analyze the MLE based on its relation to the GPM.
The optimality of the MLE can also be derived from (\ref{eq:critical-lem}). This is by showing that MLE is a fixed point of the map $f$. That is, $\wh{z}=f(\wh{z})$, and therefore with $z=\wh{z}$, we can rearrange (\ref{eq:critical-lem}) into the bound $\ell(\wh{z},z)\leq \frac{1+\delta}{1-\delta}\frac{\sigma^2}{2p}$. 

To derive the lower bound result, we show that it is sufficient to analyze the Bayes risk of a subproblem of estimating the relative angle $z_j\bar{z}_k$ for each $j\neq k$,
\begin{equation}
\inf_{\wh{T}}\int \int \pi(z_j)\pi(z_k)\mathbb{E}_z|\wh{T}-z_j\bar{z}_k|^2\diff z_j\diff z_k. \label{eq:pair-low-piece}
\end{equation}
In other words, the difficulty of estimating a vector in $\mathbb{C}_1^n$ is determined by the average difficulty of estimating $z_j\bar{z}_k$ given the knowledge of $(z_l)_{l\in[n]\backslash\{j,k\}}$ for all $j\neq k$. To lower bound (\ref{eq:pair-low-piece}), we apply a multivariate van Trees' inequality \citep{gill1995applications} that relates the Bayes risk (\ref{eq:pair-low-piece}) to the Fisher information of the phase synchronization model.

\paragraph{Paper Organization.} The rest of the paper is organized as follows. In Section \ref{sec:key}, we establish a key lemma that implies the critical inequality (\ref{eq:critical-lem}). The implications of the key lemma on the statistical error bounds of GPM and MLE are discussed in Section \ref{sec:upper}, which establishes (\ref{eqn:intro_2}) for Theorem \ref{thm:intro}. The minimax lower bound of phase synchronization is proved in Section \ref{sec:lower}, which establishes (\ref{eqn:intro_1}) for Theorem \ref{thm:intro}. Finally, Section \ref{sec:proof} collects the remaining technical proofs of the paper.

\paragraph{Notation.}

For $d \in \mathbb{N}$, we write $[d] = \{1,\dotsc,d\}$.  Given $a,b\in\mathbb{R}$, we write $a\vee b=\max(a,b)$ and $a\wedge b=\min(a,b)$. For a set $S$, we use $\mathbb{I}\{S\}$ and $|S|$ to denote its indicator function and cardinality respectively. For a complex number $x\in\mathbb{C}$, we use $\bar{x}$ for its complex conjugate and $\abs{x}$ for its modulus.
For a matrix $B =(B_{jk})\in\mathbb{C}^{d_1\times d_2}$, we use $B^\H \in\mathbb{C}^{d_2\times d_1}$ for its conjugate transpose such that $B^{\H}= (\bar{B}_{kj})$. 
The Frobenius norm and operator norm of $B$ are defined by $\fnorm{B}=\sqrt{\sum_{j=1}^{d_1}\sum_{k=1}^{d_2}|B_{jk}|^2}$ and $\opnorm{B} = \sup_{u\in\mathbb{C}^{d_1},v\in\mathbb{C}^{d_2}:\norm{u}=\norm{v}=1} u^\H Bv$. For $U,V\in\mathbb{C}^{d_1\times d_2}$, $U\circ V\in\mathbb{R}^{d_1\times d_2}$ is the Hadamard product $U\circ V=(U_{jk}V_{jk})$.
The notation $\mathbb{P}$ and $\mathbb{E}$ are generic probability and expectation operators whose distribution is determined from the context.

\section{A Key Lemma}\label{sec:key}

Recall that we observe a random graph $A_{jk}\sim \text{Bernoulli}(p)$ independently for all $1\leq j<k\leq n$. Whenever $A_{jk}=1$, we also observe $Y_{jk}=z_j^*\bar{z}_k^*+\sigma W_{jk}$ with $W_{jk}\sim\cn(0,1)$. In summary, the observations contain an adjacency matrix $A$ and a masked version of pairwise interactions $A\circ Y$, which are both Hermitian  as we define $Y_{jk}=\bar Y_{kj}$ and $A_{jk}= A_{kj}$ for all $1\leq k<j \leq n$ and $Y_{jj}=A_{jj}=0$ for all $j\in[n]$.

In this section, we establish a lemma that shows the contraction of the loss function through power iteration. Note that $\mathbb{E}(A\circ Y)=pz^*z^{*\H} - p I_n\approx pz^*z^{*\H}$, and thus for any $z\in\mathbb{C}_1^n$, the product $\mathbb{E}(A\circ Y)z \approx p(z^{*\H} z)z^*$ is approximately proportional to $z^*$ up to a global phase. Motivated by this fact, the power iteration repeatedly computes $(A\circ Y)z$ and applies normalization to each coordinate. The algorithm can be explicitly written as
\begin{equation}
z_j^{(t)} = \begin{cases}\frac{\sum_{k\in[n]\backslash\{j\}}A_{jk}Y_{jk}z_k^{(t-1)}}{\left|\sum_{k\in[n]\backslash\{j\}}A_{jk}Y_{jk}z_k^{(t-1)}\right|}, & \sum_{k\in[n]\backslash\{j\}}A_{jk}Y_{jk}z_k^{(t-1)}\neq 0, \\
1,  & \sum_{k\in[n]\backslash\{j\}}A_{jk}Y_{jk}z_k^{(t-1)}=0.
\end{cases}\label{eq:power}
\end{equation}
Let us shorthand the above formula as
\begin{equation}
z^{(t)} = f(z^{(t-1)}),\label{eqn:f}
\end{equation}
by introducing a map $f:\mathbb{C}_1^n\rightarrow\mathbb{C}_1^n$ such that the $j$th entry of $f(z^{(t-1)})$ is given by (\ref{eq:power}). The following lemma characterizes the evolution of the loss function (\ref{eq:loss}) through the map $f$.

\begin{lemma}\label{lem:key}
Assume $\sigma^2=o(np)$ and $\frac{np}{\log n}\rightarrow\infty$. Then, for any $\gamma=o(1)$, there exist some $\delta_1=o(1)$ and $\delta_2=o(1)$ such that
\begin{eqnarray*}
&& \mathbb{P}\left(\ell(f(z),z^*)\leq \delta_1 \ell(z,z^*) + (1+\delta_2)\frac{\sigma^2}{2p}\quad\text{for any }z\in \mathbb{C}_1^n\text{ such that }\ell(z,z^*)\leq \gamma n\right) \\
&\geq& 1-(2n)^{-1}-\exp\left(-\left(\frac{np}{\sigma^2}\right)^{1/4}\right).
\end{eqnarray*}
In particular, $\delta_1$ and $\delta_2$ can be chosen to satisfy $\delta_1=O\left(\left(\gamma^2+\frac{\log n+\sigma^2}{np}\right)^{1/4}\right)$ and $\delta_2=O\left(\sqrt{\frac{\log n+\sigma^2}{np}}\right)$.
\end{lemma}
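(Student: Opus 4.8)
The plan is to work with a fixed $z \in \mathbb{C}_1^n$ with $\ell(z,z^*) \le \gamma n$, obtain the contraction bound with the stated high probability, and only at the end upgrade to a uniform statement over all such $z$ by a covering/net argument. Without loss of generality I may assume the phase alignment is trivial, i.e.\ the optimal $a$ in $\ell(z,z^*)$ is $a=1$, so that $\ell(z,z^*) = \sum_j |z_j - z_j^*|^2 =: \|z - z^*\|^2$; this can be arranged because the map $f$ is equivariant under a global phase rotation of its input. The first step is a clean decomposition of the unnormalized iterate. Write $x_j := \sum_{k \ne j} A_{jk} Y_{jk} z_k$ and insert the model $Y_{jk} = z_j^* \bar z_k^* + \sigma W_{jk}$ to get
\begin{equation*}
x_j = \Big(\sum_{k \ne j} A_{jk}\, \bar z_k^* z_k\Big) z_j^* + \sigma \sum_{k \ne j} A_{jk} W_{jk} z_k =: \lambda_j z_j^* + \sigma g_j,
\end{equation*}
and then further split $\lambda_j = np + (\lambda_j - np)$ and $g_j = (\text{term along } z_j^*) + (\text{orthogonal noise})$. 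The quantity $\sum_{k\ne j} A_{jk} \bar z_k^* z_k$ concentrates around $np$ times $\frac1n\sum_k \bar z_k^* z_k$, which in turn is $1 - \ell(z,z^*)/(2n) + (\text{small})$ by the alignment normalization; the deviations of both $\lambda_j$ from its mean and of $g_j$ will be controlled by Bernstein-type inequalities for sums of independent (conditionally on $z$, which is fixed) bounded/subgaussian terms.

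The heart of the argument is then a per-coordinate estimate: after normalizing, $f(z)_j = x_j / |x_j|$, and I want to bound $|f(z)_j - z_j^*|^2$. The geometry here is that projecting a vector $\lambda_j z_j^* + \sigma g_j$ (with $\lambda_j > 0$ dominant) onto $\mathbb{C}_1$ keeps essentially the component of $\sigma g_j$ that is orthogonal to $z_j^*$, divided by $\lambda_j$; concretely $|f(z)_j - z_j^*|^2 \approx \frac{\sigma^2}{\lambda_j^2} |\,\mathrm{Im}(\bar z_j^* g_j)\,|^2$ up to higher-order corrections, plus a controlled error when $|x_j|$ is atypically small (an event I will show is rare, using $\sigma^2 = o(np)$ so that $\lambda_j \approx np \gg \sigma \|g_j\|$ typically). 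Summing over $j$: the main term is $\sum_j \frac{\sigma^2}{\lambda_j^2} |\mathrm{Im}(\bar z_j^* g_j)|^2$. Replacing $\lambda_j^2$ by $(np)^2$ up to a $(1+o(1))$ factor, and noting $g_j = \sum_{k\ne j} A_{jk} W_{jk} z_k$, the conditional (on $A$) expectation of $|\mathrm{Im}(\bar z_j^* g_j)|^2$ is $\tfrac12 \sum_{k\ne j} A_{jk} \le \tfrac12 np(1+o(1))$, so in expectation the main term is $\le \frac{\sigma^2}{(np)^2} \cdot n \cdot \tfrac{np}{2} = \frac{\sigma^2}{2p}$; a concentration step (again Bernstein, noting this is a quadratic form in the independent Gaussians $W_{jk}$ conditionally on $A$ and $z$) promotes this to the high-probability bound $(1+\delta_2)\frac{\sigma^2}{2p}$ with $\delta_2 = O(\sqrt{(\log n + \sigma^2)/(np)})$. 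The cross terms and the correction from $\lambda_j$ not being exactly $np$ (which carries the $\ell(z,z^*)/n$ discrepancy) produce the $\delta_1 \ell(z,z^*)$ term, with $\delta_1$ of the stated fourth-root order — the fourth root arising from combining a $\gamma^2$-type bias contribution with Cauchy–Schwarz applied to cross terms between the signal defect and the noise.

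Two places will require the most care. First, the bad event $\{|x_j| \text{ small}\}$: I cannot afford a union bound that loses a $\log n$ factor in a way that violates $\sigma^2 = o(np)$, so I will bound the \emph{number} of coordinates with $|x_j| \le np/2$ (say) and show it is $o(\sigma^2/p)$ with the claimed probability, contributing negligibly to the loss since each such coordinate contributes at most $4$. The failure probability $\exp(-(np/\sigma^2)^{1/4})$ in the statement strongly suggests the relevant large-deviation event is calibrated at this fourth-root scale, presumably from optimizing a Bernstein bound where the variance proxy scales like $\sigma^2/(np)$. Second, and this is the genuine obstacle, is making the bound \emph{uniform} over the continuum of $z$ with $\ell(z,z^*) \le \gamma n$: the naive approach is an $\epsilon$-net of $\mathbb{C}_1^n$, but its cardinality is exponential in $n$, which would swamp the probability bounds. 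The resolution is that $f(z)$ and the intermediate quantities are Lipschitz in $z$ (the map $z \mapsto (A\circ Y)z$ is linear with operator norm $O(\sqrt{np})$ on the relevant event, and normalization is $1$-Lipschitz away from the small-$|x_j|$ set), so I only need the bound on a net at resolution $\epsilon$ polynomially small, paying $\log(1/\epsilon)^n$ — still too much unless one exploits that $\opnorm{A\circ W}$ and $\opnorm{A - pE}$ themselves concentrate (standard random-matrix bounds, $O(\sqrt{np})$ with exponentially small failure probability) and that these are the \emph{only} random objects; conditionally on the event that these operator norms are well-behaved, the per-$z$ bound becomes a deterministic consequence plus the single quadratic-form concentration, and the uniformity over $z$ is handled by the smoothness of that quadratic form in $z$ together with a coarser net whose size is absorbed by the $\exp(-(np/\sigma^2)^{1/4})$ slack. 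I would structure the write-up so that all genuinely random estimates (operator norms of $A\circ W$ and $A - p\,\mathbf{1}\mathbf{1}^\H$, the degree concentration $\sum_k A_{jk} \approx np$, and the one quadratic-form concentration) are collected first as high-probability events, and the contraction inequality is then derived deterministically on their intersection, with the $z$-uniformity falling out of Lipschitz continuity on that intersection.
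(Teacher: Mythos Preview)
Your overall architecture---collect a handful of random events first, then derive the contraction deterministically on their intersection---is exactly the right one, and your per-coordinate geometric picture ($|f(z)_j - z_j^*|^2 \approx \sigma^2 \lambda_j^{-2} |\mathrm{Im}(\bar z_j^* g_j)|^2$) is correct. But there is a genuine gap at precisely the point you flag as the ``genuine obstacle,'' and your proposed fix does not close it. Any $\epsilon$-net of $\{z \in \mathbb{C}_1^n : \ell(z,z^*) \le \gamma n\}$ has cardinality $\exp(\Theta(n\log(1/\epsilon)))$, while the probability budget in the statement is only $(2n)^{-1} + \exp\big(-(np/\sigma^2)^{1/4}\big)$, and under the hypotheses $np/\sigma^2$ may tend to infinity arbitrarily slowly. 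No concentration inequality for the $z$-dependent quadratic form $\sum_j |\mathrm{Im}(\bar z_j^* g_j)|^2$ with $g_j = \sum_{k\ne j} A_{jk} W_{jk} z_k$ can survive a union bound over such a net, and Lipschitz continuity does not help because the resolution you would need is not $O(1)$.

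The paper avoids the net entirely by inserting the split $z_k = z_k^* b + (z_k - z_k^* b)$ \emph{inside} the noise term $g_j$, not only inside $\lambda_j$. This yields two pieces. The first, $H_j \propto \sum_{k\ne j} A_{jk} W_{jk} z_k^*$, depends only on the fixed truth $z^*$, so a \emph{single} Chebyshev bound on $\sum_j |\mathrm{Im}(\bar z_j^* H_j)|^2$ and a \emph{single} Markov bound on the number of $j$ with $|H_j|>\rho$ suffice, with no union over $z$; this is where the $(1+\delta_2)\sigma^2/(2p)$ term and the $\exp(-(np/\sigma^2)^{1/4})$ probability come from. The second, $G_j \propto \sum_{k\ne j} A_{jk} W_{jk} (z_k - z_k^* b)$, satisfies $\sum_j |G_j|^2 \le (np)^{-2}\sigma^2 \opnorm{A\circ W}^2 \|z - z^* b\|^2$, so once the single event $\opnorm{A\circ W} \le C\sqrt{np}$ holds this piece is bounded \emph{deterministically} by $C(\sigma^2/np)\,\ell(z,z^*)$ for every $z$ simultaneously; together with the analogous operator-norm control of the $A_{jk}-p$ fluctuations, this is the source of the $\delta_1 \ell(z,z^*)$ term. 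The bad-coordinate count is handled the same way: the $z$-dependent indicators $\sum_j \mathbb{I}\{|G_j|>\rho\}$ are bounded by $\rho^{-2}\sum_j |G_j|^2$, which is again deterministic on the operator-norm event. With this split, every probabilistic inequality in the proof is $z$-free, and uniformity over $z$ is automatic---no covering argument is needed at all. (Relatedly, your ``WLOG the optimal phase is $1$'' is a per-$z$ reduction; the paper instead carries the alignment phase $b=b(z)$ explicitly, which is cleaner once one wants a statement uniform in $z$.)
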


The lemma shows that for any $z\in\mathbb{C}_1^n$ that has a nontrivial error, the vector $f(z)$ will have an error that is smaller by a multiplicative factor $\delta_1$ up to an additive term $(1+\delta_2)\frac{\sigma^2}{2p}$. Plugging $z=z^*$ into the inequality reveals that
$$\ell(f(z^*),z^*)\leq (1+\delta_2)\frac{\sigma^2}{2p}.$$
Thus, the additive term $(1+\delta_2)\frac{\sigma^2}{2p}$ can be understood as the oracle error given the knowledge of $z^*$. Indeed, we show in Theorem \ref{thm:lower} that $\frac{\sigma^2}{2p}$ is the minimax lower bound for phase synchronization under the loss function (\ref{eq:loss}).

The two conditions $\sigma^2=o(np)$ and $\frac{np}{\log n}\rightarrow\infty$ are essentially necessary for the result to hold. While $\sigma^2=o(np)$ makes sure that the error $\frac{\sigma^2}{2p}$ is of a nontrivial order, the condition $\frac{np}{\log n}\rightarrow\infty$ guarantees that the random graph is connected.
We can slightly relax both conditions to $np\geq C\sigma^2$ and $p\geq C\frac{\log n}{n}$ for some sufficiently large constant $C>0$ at the cost of replacing $\delta_1$ and $\delta_2$ in the result of Lemma \ref{lem:key} by some sufficiently small constants. However, vanishing $\delta_1$ and $\delta_2$ require that $\sigma^2$ to be of smaller order than $np$ and $p$ to be of greater order than $\frac{\log n}{n}$.

Lemma \ref{lem:key} implies that as long as $\ell(z^{(t-1)},z^*)\leq \gamma n$ for some $\gamma=o(1)$, the next step of power iteration (\ref{eq:power}) satisfies
\begin{equation}
\ell(z^{(t)},z^*) \leq \delta_1\ell(z^{(t-1)},z^*) + (1+\delta_2)\frac{\sigma^2}{2p}. \label{eq:inter-lem-key}
\end{equation}
The condition $\ell(z^{(t-1)},z^*)\leq \gamma n$ then implies $\ell(z^{(t)},z^*)\leq \delta_1\gamma n+(1+\delta_2)\frac{\sigma^2}{2p}$. Given that $\frac{\sigma^2}{2p}=o(n)$, we can always choose $\gamma=o(1)$ that satisfies $\frac{\sigma^2}{2p}=o(\gamma n)$. Therefore, $\ell(z^{(t)},z^*)\leq \gamma n$. Thus, a simple induction argument implies that (\ref{eq:inter-lem-key}) holds for all $t\geq 1$ as long as $\ell(z^{(0)},z^*)\leq \gamma n$. The one-step iteration bound (\ref{eq:inter-lem-key}) immediately implies the linear convergence
\begin{equation}
\ell(z^{(t)},z^*) \leq \delta_1^t\ell(z^{(0)},z^*) + \frac{1+\delta_2}{1-\delta_1}\frac{\sigma^2}{2p}, \label{eq:inter-lem-geo}
\end{equation}
for all $t\geq 1$.

\section{Optimality of Generalized Power Method and MLE}\label{sec:upper}

In this section, we show that both the GPM and the MLE achieve the optimal error $\frac{\sigma^2}{2p}$ by using the conclusion of Lemma \ref{lem:key}.  Theorem \ref{thm:power} and Theorem \ref{thm:MLE} together establish the upper bounds (\ref{eqn:intro_2}) for Theorem \ref{thm:intro}.

\subsection{Generalized Power Method}
The result of Lemma \ref{lem:key} implies that (\ref{eq:inter-lem-geo}) holds for all $t\geq 1$, as long as $\ell(z^{(0)},z^*)\leq \gamma n$ for some $\gamma=o(1)$. Since $\mathbb{E}(A\circ Y)$ is approximately a rank-one matrix, we can compute the leading eigenvector of $A\circ Y$ as the initialization of the power method. Let $\wh{u}\in\mathbb{C}^n$ be the leading eigenvector of $A\circ Y$, and we define $z^{(0)}$ according to
\begin{equation}
{z}_j^{(0)}=\begin{cases}\frac{\wh{u}_j}{|\wh{u}_j|}, & \wh{u}_j\neq 0, \\
1, & \wh{u}_j=0.
\end{cases}
\label{eq:spec-ini}
\end{equation}
It is easy to check that $z^{(0)}\in\mathbb{C}_1^n$, and thus  can be applied by the power iteration (\ref{eq:power}). This leads to the GPM for phase synchronization, which is formally presented as Algorithm \ref{alg}.

\begin{algorithm}[H]
\SetAlgoLined
\KwIn{The data $A\circ Y$ and the number of iterations $t_{\max}$.}
\KwOut{The estimator $\wh{z}=z^{t_{\max}}$.}
 \nl Obtain $z^{(0)}$ from the leading eigenvector of $A\circ Y$ by (\ref{eq:spec-ini})\;
 \nl \For{$t=1,\cdots,t_{\max}$}{
    $$z^{(t)} = f(z^{(t-1)}),$$
    where $f(\cdot)$ is defined in (\ref{eq:power}).
 }
\caption{Generalized Power Method for Phase Synchronization \label{alg}}
\end{algorithm}

The error bound of $z^{(0)}$ is given by the following lemma.

\begin{lemma}\label{lem:spec-ini}
Assume $\frac{np}{\log n}\rightarrow\infty$. Then, there exists some constant $C>0$ such that
$$\ell(z^{(0)},z^*)\leq C\frac{\sigma^2+1}{p},$$
with probability at least $1-(2n)^{-1}$.
\end{lemma}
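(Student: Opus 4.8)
The plan is to bound $\ell(z^{(0)}, z^*)$ by first controlling the deviation of the leading eigenvector $\wh{u}$ of $A\circ Y$ from $z^*/\sqrt{n}$ (up to global phase), and then controlling the cost of the entrywise normalization step (\ref{eq:spec-ini}). The natural route is a Davis--Kahan / $\sin\Theta$ argument. Write $A\circ Y = \Expect(A\circ Y) + (A\circ Y - \Expect(A\circ Y)) = p z^* z^{*\H} - pI_n + E$, where $E = A\circ Y - \Expect(A\circ Y)$. The matrix $pz^*z^{*\H}$ has leading eigenvalue $np$ with eigenvector $z^*/\sqrt{n}$, while $-pI_n$ is a harmless multiple of the identity. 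So the key quantity is $\opnorm{E}$. The entries of $E$ are independent (up to the Hermitian constraint) mean-zero, and conditionally on $A$ the off-diagonal entries are sub-Gaussian with variance proxy on the order of $p\sigma^2 + p$ (the $p\sigma^2$ from the Gaussian noise on the observed edges, the extra from the centered Bernoulli times the signal). A matrix Bernstein or a standard bound for random matrices with independent entries (e.g. the results used for the spectral norm of the centered adjacency matrix of an Erd\H{o}s--R\'enyi graph, valid once $np \gtrsim \log n$) gives $\opnorm{E} \lesssim \sqrt{np(\sigma^2+1)} + \text{(lower order)}$ with probability at least $1-(2n)^{-1}$. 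Under $np/\log n \to \infty$ this is $o(np)$, so there is a genuine spectral gap.

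Next I would apply the Davis--Kahan theorem: since the signal eigenvalue is $np$ and all other eigenvalues of $pz^*z^{*\H} - pI_n$ lie in $\{-p\}$, the gap is $np + p = np(1+o(1))$, and the $\sin\Theta$ bound yields
\[
\min_{b\in\mathbb{C}_1}\norm{\wh{u} - b\,\frac{z^*}{\sqrt n}}^2 \;\lesssim\; \frac{\opnorm{E}^2}{(np)^2} \;\lesssim\; \frac{np(\sigma^2+1)}{(np)^2} = \frac{\sigma^2+1}{np}.
\]
Rescaling by $\sqrt n$, if we set $v = \sqrt n\,\wh{u}$ (so $\norm{v}^2 = n$), this says $\min_{b\in\mathbb{C}_1}\norm{v - b z^*}^2 \lesssim \frac{\sigma^2+1}{p}$, which is already the target order \emph{before} normalization. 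The remaining step is to show that replacing $v$ by its entrywise normalization $z^{(0)}$ (i.e. $z^{(0)}_j = v_j/|v_j|$) does not inflate the loss by more than a constant factor. The elementary fact here is that for any $x\in\mathbb{C}$ with $|x|\neq 0$ and any $w\in\mathbb{C}_1$, one has $|x/|x| - w| \le 2|x - w|$ — projecting onto the unit circle is a $2$-Lipschitz retraction on the relevant region — and for the (few) indices with $v_j = 0$ one has $|z^{(0)}_j - b z^*_j| = |1 - bz^*_j| \le 2 \le 2|v_j - bz^*_j|$ trivially since $v_j=0$ forces $|v_j - bz^*_j| = 1$. Summing over $j$ with the common optimal phase $b$ gives $\ell(z^{(0)}, z^*) \le 4 \min_{b\in\mathbb{C}_1}\norm{v - bz^*}^2 \lesssim \frac{\sigma^2+1}{p}$, completing the proof.

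The main obstacle is the spectral-norm bound on $E = A\circ Y - \Expect(A\circ Y)$: one must handle simultaneously the randomness of the graph $A$ and the heavy-ish (Gaussian, hence light, but scaled by $\sigma$) noise $W$, and get the sharp order $\sqrt{np(\sigma^2+1)}$ rather than something with spurious $\log n$ or $\sigma^2$ factors, which is exactly what is needed for the gap condition and the final rate. This is where the hypothesis $np/\log n \to \infty$ is used — it is the threshold at which the centered Erd\H{o}s--R\'enyi adjacency matrix has operator norm $O(\sqrt{np})$ rather than being dominated by a high-degree vertex. A clean way to organize it is to split $E = (A - p\mathbf{1}\mathbf{1}^\H + pI_n)\circ(z^*z^{*\H}) + \sigma(A\circ W)$ and bound the two pieces separately, the first by known ER spectral bounds and the second by a conditional (on $A$) Gaussian matrix bound combined with a union/degree argument; alternatively cite an existing lemma of this exact form from the synchronization literature. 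The normalization step and the Davis--Kahan step are routine by comparison.
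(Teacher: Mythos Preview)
Your proposal is correct and the operator-norm bound on $E$ is exactly the one the paper uses (their Lemmas \ref{lem:ER-graph} and \ref{lem:bandeira}, after the same splitting $E=(A-\mathbb{E}A)\circ z^*z^{*\H}+\sigma(A\circ W)$ that you suggest). The route to the final bound differs, however. You go directly through Davis--Kahan on the eigenvector $\wh u$, then apply the entrywise $2$-Lipschitz projection $x\mapsto x/|x|$ coordinate by coordinate. The paper instead recasts the eigenvector as $\wh z=\argmin_{\|z\|^2=n}\fnorm{p^{-1}A\circ Y-zz^{\H}}^2$, uses the basic inequality $\fnorm{p^{-1}A\circ Y-\wh z\wh z^{\H}}^2\le\fnorm{p^{-1}A\circ Y-z^*z^{*\H}}^2$ together with the rank-$2$ structure of $\wh z\wh z^{\H}-z^*z^{*\H}$ to get $\fnorm{\wh z\wh z^{\H}-z^*z^{*\H}}\lesssim\opnorm{p^{-1}A\circ Y-z^*z^{*\H}}$, applies the $2$-Lipschitz projection at the outer-product level ($|z_j^{(0)}\bar z_k^{(0)}-z_j^*\bar z_k^*|\le 2|\wh z_j\bar{\wh z}_k-z_j^*\bar z_k^*|$), and finally converts to $\ell$ via the equivalence $\ell(z,z^*)\le n^{-1}\fnorm{zz^{\H}-z^*z^{*\H}}^2$ of Lemma \ref{lem:loss-equiv}. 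Your Davis--Kahan argument is arguably more direct for the spectral initializer; the paper's variational argument has the payoff that it applies verbatim with $\mathbb{C}_1^n$ in place of $\{\|z\|^2=n\}$, which is why Lemmas \ref{lem:spec-ini} and \ref{lem:MLE-ini} are proved together there.
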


Under the condition that $\sigma^2=o(np)$ and $\frac{np}{\log n}\rightarrow\infty$, the error rate of Lemma \ref{lem:spec-ini} satisfies $\frac{\sigma^2+1}{p}=o(n)$ so that $\ell(z^{(0)},z^*)\leq\gamma n$ holds for some $\gamma=o(1)$. By Lemma \ref{lem:key} and its implication (\ref{eq:inter-lem-geo}), we directly obtain the following result.

\begin{thm}\label{thm:power}
Assume $\sigma^2=o(np)$ and $\frac{np}{\log n}\rightarrow\infty$. Then, there exists some $\delta=o(1)$, such that the GPM
(i.e, Algorithm \ref{alg})
satisfies
$$\ell(z^{(t)},z^*) \leq (1+\delta)\frac{\sigma^2}{2p},$$
for all $t\geq \log\left(\frac{1}{\sigma^2}\right)$ with probability at least $1-n^{-1}-\exp\left(-\left(\frac{np}{\sigma^2}\right)^{1/4}\right)$. In particular, $\delta$ can be chosen to satisfy $\delta=O\left(\left(\frac{\log n+\sigma^2}{np}\right)^{1/4}\right)$.
\end{thm}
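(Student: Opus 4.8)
This theorem is essentially a corollary of Lemma~\ref{lem:key} and Lemma~\ref{lem:spec-ini}: the plan is to run the deterministic induction already sketched after Lemma~\ref{lem:key}, turning the one-step bound~(\ref{eq:inter-lem-key}) into the geometric decay~(\ref{eq:inter-lem-geo}), and then estimate how many iterations are needed to extinguish the transient term $\delta_1^{\,t}\ell(z^{(0)},z^*)$. Everything is carried out on the intersection of the event of Lemma~\ref{lem:spec-ini} (probability at least $1-(2n)^{-1}$) and the event of Lemma~\ref{lem:key} (probability at least $1-(2n)^{-1}-\exp(-(np/\sigma^2)^{1/4})$); a union bound over the two bad events yields exactly the claimed probability $1-n^{-1}-\exp(-(np/\sigma^2)^{1/4})$.

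\emph{Step 1 (calibrating the radius $\gamma$).} I would fix $\gamma\asymp\sqrt{(\log n+\sigma^2)/(np)}$, which is $o(1)$ under $\sigma^2=o(np)$ and $np/\log n\to\infty$. Three requirements pin down this choice: (i) $\ell(z^{(0)},z^*)\le C(\sigma^2+1)/p\le\gamma n$, so that the spectral initializer~(\ref{eq:spec-ini}) lies in the region where Lemma~\ref{lem:key} is valid — one checks this by squaring the inequality and using $\sigma^2=o(np)$ and $\log n\to\infty$; (ii) $\frac{\sigma^2}{2p}=o(\gamma n)$, so that the ball $\{z\in\mathbb{C}_1^n:\ell(z,z^*)\le\gamma n\}$ is forward-invariant under $f$ once the one-step bound is available; and (iii) $\gamma^2\lesssim(\log n+\sigma^2)/(np)$, so that the $\delta_1=O((\gamma^2+(\log n+\sigma^2)/(np))^{1/4})$ provided by Lemma~\ref{lem:key} is of the advertised order $O(((\log n+\sigma^2)/(np))^{1/4})$, and likewise for $\delta_2$.

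\emph{Step 2 (induction and geometric decay).} Starting from $\ell(z^{(0)},z^*)\le\gamma n$ given by Step~1(i), suppose $\ell(z^{(t-1)},z^*)\le\gamma n$. Then Lemma~\ref{lem:key} applies with $z=z^{(t-1)}$ and gives~(\ref{eq:inter-lem-key}); combined with Step~1(ii) and $\delta_1=o(1)$ this also yields $\ell(z^{(t)},z^*)\le\gamma n$, so the induction closes and~(\ref{eq:inter-lem-key}) holds for every $t\ge1$. Iterating it produces~(\ref{eq:inter-lem-geo}),
\[
\ell(z^{(t)},z^*)\le\delta_1^{\,t}\,\ell(z^{(0)},z^*)+\frac{1+\delta_2}{1-\delta_1}\cdot\frac{\sigma^2}{2p},
\]
whose second term equals $(1+O(\delta_1+\delta_2))\frac{\sigma^2}{2p}=(1+\delta)\frac{\sigma^2}{2p}$ with $\delta$ of the target rate.

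\emph{Step 3 (killing the transient term) and the main difficulty.} It remains to check that $\delta_1^{\,t}\ell(z^{(0)},z^*)\lesssim\delta_1\cdot\sigma^2/p$ once $t\gtrsim\log(1/\sigma^2)$. Using $\ell(z^{(0)},z^*)\le C(\sigma^2+1)/p$ from Lemma~\ref{lem:spec-ini}, this reduces to $\delta_1^{\,t-1}(1+\sigma^{-2})\lesssim1$: when $\sigma^2$ stays bounded away from $0$ it is immediate from $\delta_1=o(1)$ and $t\ge1$; when $\sigma^2\to0$ one needs $\delta_1^{\,t-1}\lesssim\sigma^2$, i.e.\ $(t-1)\log(1/\delta_1)\gtrsim\log(1/\sigma^2)$, which holds for $t\gtrsim\log(1/\sigma^2)$ because $\log(1/\delta_1)\to\infty$ (indeed $1/\delta_1\gtrsim(np/(\log n+\sigma^2))^{1/4}\to\infty$ by the two hypotheses). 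Combining Steps~2 and~3 gives $\ell(z^{(t)},z^*)\le(1+\delta)\frac{\sigma^2}{2p}$ for all $t\ge\log(1/\sigma^2)$. The genuinely substantive content is packaged into Lemmas~\ref{lem:key} and~\ref{lem:spec-ini}; the only points in the present argument needing real care are the simultaneous calibration of $\gamma$ in Step~1 — it must be large enough to contain the spectral initializer and to dominate $\frac{\sigma^2}{2pn}$, yet small enough to keep $\delta_1$ at the optimal fourth-root rate — and the transient analysis of Step~3 in the small-$\sigma^2$ regime, which is exactly what forces the iteration count $t\ge\log(1/\sigma^2)$.
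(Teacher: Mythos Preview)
Your proposal is correct and follows exactly the route the paper takes: invoke Lemma~\ref{lem:spec-ini} to place $z^{(0)}$ inside a ball of radius $\gamma n$, then apply Lemma~\ref{lem:key} inductively to obtain the geometric decay~(\ref{eq:inter-lem-geo}), and combine the two events by a union bound. Your treatment is in fact more explicit than the paper's one-line ``directly obtain'' argument --- in particular, your calibration $\gamma\asymp\sqrt{(\log n+\sigma^2)/(np)}$ is precisely what is needed (but left implicit in the paper) to recover the sharp rate $\delta=O\bigl(((\log n+\sigma^2)/(np))^{1/4}\bigr)$, and your Step~3 makes transparent why the threshold $t\ge\log(1/\sigma^2)$ arises.
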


We remark that the number of iterations $\log\left(\frac{1}{\sigma^2}\right)$ required by the theorem can be improved in some special cases. For example, when $p=1$, the error bound of Lemma \ref{lem:spec-ini} can be improved to $\ell(z^{(0)},z^*)\leq C\sigma^2$ by a matrix perturbation analysis \citep{boumal2016nonconvex}. Then, (\ref{eq:inter-lem-key}) implies that $\ell(z^{(1)},z^*)\leq (1+o(1))\frac{\sigma^2}{2}$. In other words, when the graph is fully connected, a one-step refinement of power iteration is sufficient to achieve the optimal error.

\subsection{Maximum Likelihood Estimator}
Next, we discuss how the result of Lemma \ref{lem:key} also implies the optimality of the MLE. According to the data generating process, the MLE is given by
\begin{equation}
\wh{z}=\argmin_{z\in\mathbb{C}_1^n}\sum_{1\leq j< k\leq n}A_{jk}|Y_{jk}-z_j\bar{z}_k|^2, \label{eq:MLE}
\end{equation}
which is equivalent to $\argmax_{z\in\mathbb{C}_1^n} z^{\H} (A\circ Y)z$.
By the definition of $\wh{z}$, its $j$th entry must satisfy
$$\wh{z}_j=\argmin_{z_j\in\mathbb{C}_1}\sum_{k\in[n]\backslash\{j\}}A_{jk}|Y_{jk}-z_j\bar{\wh{z}}_k|^2=\frac{\sum_{k\in[n]\backslash\{j\}}A_{jk}Y_{jk}\wh{z}_k}{\left|\sum_{k\in[n]\backslash\{j\}}A_{jk}Y_{jk}\wh{z}_k\right|}.$$
In other words, we have
$$\wh{z}=f(\wh{z}),$$
and the MLE is a fixed point of the power iteration. As long as we can establish a crude bound $\ell(\wh{z},z^*)\leq \gamma n$ for some $\gamma=o(1)$, Lemma \ref{lem:key} automatically leads to the optimal error.

\begin{lemma}\label{lem:MLE-ini}
Assume $\frac{np}{\log n}\rightarrow\infty$. Then, there exists some constant $C>0$ such that the MLE has error bound
$$\ell(\wh{z},z^*)\leq C\frac{\sigma^2+1}{p},$$
with probability at least $1-(2n)^{-1}$.
\end{lemma}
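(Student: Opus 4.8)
The plan is to run the basic-inequality argument for the $M$-estimator (\ref{eq:MLE}), sharpened by a linearization step that exploits that $\wh z$ and $z^*$ both lie on $\mathbb{C}_1^n$. Decompose $A\circ Y = \mathbb{E}(A\circ Y)+\Delta$, where $\mathbb{E}(A\circ Y)=p(z^*z^{*\H}-I_n)$ and $\Delta := A\circ Y - p(z^*z^{*\H}-I_n)$ is a mean-zero Hermitian matrix. For every $z\in\mathbb{C}_1^n$ one has $z^\H(A\circ Y)z = p\pth{|z^\H z^*|^2 - n} + z^\H\Delta z$, so the defining optimality $\wh z^\H(A\circ Y)\wh z \ge z^{*\H}(A\circ Y)z^*$ rearranges into
\begin{equation*}
p\pth{n^2 - |\wh z^\H z^*|^2} \;\le\; \wh z^\H\Delta\wh z - z^{*\H}\Delta z^* .
\end{equation*}

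Next I would normalize the global phase. Both $z\mapsto z^\H(A\circ Y)z$ and $\ell(z,z^*)$ are invariant under $z\mapsto za$ with $a\in\mathbb{C}_1$, so I may replace $\wh z$ by $\wh z a^*$, where $a^*$ attains the minimum in (\ref{eq:loss}); after this replacement (still denoted $\wh z$) one has $\ell(\wh z,z^*)=\Norm{\wh z-z^*}^2$ and $\wh z^\H z^*$ is a nonnegative real. Writing $h:=\wh z-z^*$, this gives $\Norm{h}^2=\ell(\wh z,z^*)$ and $|\wh z^\H z^*|=n-\Norm{h}^2/2$, hence
\begin{equation*}
n^2-|\wh z^\H z^*|^2 \;=\; \tfrac{1}{2}\Norm{h}^2\pth{n+|\wh z^\H z^*|}\;\ge\;\tfrac{n}{2}\Norm{h}^2 .
\end{equation*}
Since $\Delta$ is Hermitian, $\wh z^\H\Delta\wh z-z^{*\H}\Delta z^* = 2\re\pth{h^\H\Delta z^*}+h^\H\Delta h \le 2\sqrt{n}\,\Norm{h}\,\opnorm{\Delta}+\Norm{h}^2\opnorm{\Delta}$, where I used $\Norm{\Delta z^*}\le\sqrt{n}\,\opnorm{\Delta}$. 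Combining the two bounds and cancelling one factor $\Norm{h}$ (the claim is trivial when $h=0$) gives $\Norm{h}\pth{\tfrac{pn}{2}-\opnorm{\Delta}}\le 2\sqrt{n}\,\opnorm{\Delta}$, so that as soon as $\opnorm{\Delta}\le pn/4$,
\begin{equation*}
\ell(\wh z,z^*)=\Norm{h}^2 \;\le\; \frac{64\,\opnorm{\Delta}^2}{p^2 n}.
\end{equation*}

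It remains to control $\opnorm{\Delta}$, which is the only step requiring real work. Since $A$ has zero diagonal, $\Delta = D_{z^*}(A-\mathbb{E}A)D_{z^*}^\H+\sigma\,(A\circ W)$ with $D_{z^*}=\diag(z^*)$ unitary, hence $\opnorm{\Delta}\le\opnorm{A-\mathbb{E}A}+\sigma\,\opnorm{A\circ W}$. I would invoke the standard spectral bound $\opnorm{A-\mathbb{E}A}\lesssim\sqrt{np}$ for the centered adjacency matrix, valid once $np\gtrsim\log n$, together with its counterpart $\opnorm{A\circ W}\lesssim\sqrt{np}$ for a sparse Hermitian Gaussian matrix (obtained by conditioning on $A$ and applying Gaussian matrix concentration to the surviving entries), each holding with probability at least $1-(4n)^{-1}$; this gives $\opnorm{\Delta}\le C_0(1+\sigma)\sqrt{np}$ with probability at least $1-(2n)^{-1}$. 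These two concentration inequalities are exactly where the hypothesis $np/\log n\to\infty$ enters, and in the write-up they would be cited or placed in Section \ref{sec:proof}. To reach $\opnorm{\Delta}\le pn/4$ one needs only $(1+\sigma)\sqrt{np}=o(pn)$, i.e.\ $\sigma=o(\sqrt{np})$; in the complementary regime $\sigma^2\ge c_0 np$ for a small absolute constant $c_0$, the conclusion is immediate since $\ell(\wh z,z^*)\le 4n\le (4/c_0)\sigma^2/p$. Substituting $\opnorm{\Delta}\le C_0(1+\sigma)\sqrt{np}$ into the last display yields $\ell(\wh z,z^*)\le 64C_0^2(1+\sigma)^2/p\le C(\sigma^2+1)/p$, completing the proof.
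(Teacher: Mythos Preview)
Your proof is correct and shares the same backbone as the paper's: use the optimality of $\wh z$ as a basic inequality, decompose $A\circ Y$ into signal plus noise, and control the noise by the operator-norm bounds $\opnorm{A-\mathbb{E}A}\lesssim\sqrt{np}$ and $\opnorm{A\circ W}\lesssim\sqrt{np}$ (the paper's Lemmas~\ref{lem:ER-graph} and~\ref{lem:bandeira}). The difference is in the linearization. The paper rewrites the MLE as $\argmin_{z\in\mathbb{C}_1^n}\fnorm{p^{-1}A\circ Y-zz^\H}^2$, runs the basic inequality at the level of the rank-two matrix $\wh z\wh z^\H-z^*z^{*\H}$, and obtains the clean one-sided bound $\fnorm{\wh z\wh z^\H-z^*z^{*\H}}\le 2\sqrt{2}\,\opnorm{p^{-1}A\circ Y-z^*z^{*\H}}$ via a spectral decomposition; the passage to $\ell(\wh z,z^*)$ then goes through Lemma~\ref{lem:loss-equiv}. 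Because the quadratic form is absorbed into the matrix inner product, no case split on $\sigma$ is needed. Your route, linearizing in the vector $h=\wh z-z^*$ after phase alignment, is more direct but leaves the quadratic term $h^\H\Delta h$ on the right-hand side, which forces the extra condition $\opnorm{\Delta}\le pn/4$ and hence the dichotomy on $\sigma$ (which you handle correctly with the trivial bound $\ell\le 4n$). One cosmetic point: the split should be stated as $\sigma\le c_1\sqrt{np}$ versus $\sigma>c_1\sqrt{np}$ for a fixed constant $c_1$, not as the asymptotic ``$\sigma=o(\sqrt{np})$'' versus ``$\sigma^2\ge c_0np$'', so that the two cases are genuinely complementary for each $n$.
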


Again, under the condition that $\sigma^2=o(np)$ and $\frac{np}{\log n}\rightarrow\infty$, we have $\ell(\wh{z},z^*)\leq \gamma n$ for some $\gamma=o(1)$. Lemma \ref{lem:key} and the fact $\wh{z}=f(\wh{z})$ implies that
$$\ell(\wh{z},z^*)\leq \delta_1\ell(\wh{z},z^*)+(1+\delta_2)\frac{\sigma^2}{2p}.$$
After rearrangement, we obtain the bound $\ell(\wh{z},z^*)\leq \frac{1+\delta_2}{1-\delta_1}\frac{\sigma^2}{p}$. The result is summarized into the following theorem.

\begin{thm}\label{thm:MLE}
Assume $\sigma^2=o(np)$ and $\frac{np}{\log n}\rightarrow\infty$. Then, there exists some $\delta=o(1)$, such that the MLE (\ref{eq:MLE}) satisfies
$$\ell(\wh{z},z^*) \leq (1+\delta)\frac{\sigma^2}{2p},$$
with probability at least $1-n^{-1}-\exp\left(-\left(\frac{np}{\sigma^2}\right)^{1/4}\right)$. In particular, $\delta$ can be chosen to satisfy $\delta=O\left(\left(\frac{\log n+\sigma^2}{np}\right)^{1/4}\right)$.
\end{thm}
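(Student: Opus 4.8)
The plan is to combine Lemma \ref{lem:MLE-ini} with Lemma \ref{lem:key}, exploiting the algebraic fact that the MLE is a fixed point of the power iteration map $f$. The first step is to note, as observed just before the statement, that the coordinatewise optimality conditions for $\wh z=\argmax_{z\in\mathbb{C}_1^n} z^{\H}(A\circ Y)z$ force $\wh z=f(\wh z)$: fixing all other coordinates, each $\wh z_j$ must maximize a linear functional $\re(\bar z_j \cdot c_j)$ over $z_j\in\mathbb{C}_1$, whose maximizer is $c_j/|c_j|$ with $c_j=\sum_{k\neq j}A_{jk}Y_{jk}\wh z_k$ (and the value is arbitrary when $c_j=0$, matching the convention in (\ref{eq:power})). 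This is exactly the definition of $f$, so $\wh z$ is a fixed point.

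The second step is to establish that the crude bound $\ell(\wh z,z^*)\le \gamma n$ holds for some $\gamma=o(1)$ on the event of interest. This is immediate from Lemma \ref{lem:MLE-ini}: with probability at least $1-(2n)^{-1}$ we have $\ell(\wh z,z^*)\le C\frac{\sigma^2+1}{p}$, and since $\sigma^2=o(np)$ and $\frac{np}{\log n}\to\infty$ we get $\frac{\sigma^2+1}{p}=o(n)$, so one may take $\gamma=C\frac{\sigma^2+1}{np}=o(1)$. The third step is to invoke Lemma \ref{lem:key} with this choice of $\gamma$: on an event of probability at least $1-(2n)^{-1}-\exp(-(np/\sigma^2)^{1/4})$, the inequality $\ell(f(z),z^*)\le \delta_1\ell(z,z^*)+(1+\delta_2)\frac{\sigma^2}{2p}$ holds uniformly over all $z$ with $\ell(z,z^*)\le\gamma n$, with $\delta_1=O((\gamma^2+\frac{\log n+\sigma^2}{np})^{1/4})=o(1)$ and $\delta_2=O(\sqrt{\frac{\log n+\sigma^2}{np}})=o(1)$.

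The final step is to intersect the two events (a union bound gives total failure probability at most $n^{-1}+\exp(-(np/\sigma^2)^{1/4})$) and substitute $z=\wh z$ into the inequality from Lemma \ref{lem:key}, which is legitimate because $\ell(\wh z,z^*)\le\gamma n$ on this event. Using $f(\wh z)=\wh z$ this reads
\[
\ell(\wh z,z^*)\le \delta_1\,\ell(\wh z,z^*)+(1+\delta_2)\frac{\sigma^2}{2p},
\]
and since $\delta_1<1$ eventually, rearranging yields $\ell(\wh z,z^*)\le \frac{1+\delta_2}{1-\delta_1}\frac{\sigma^2}{2p}=(1+\delta)\frac{\sigma^2}{2p}$ with $\delta=\frac{\delta_1+\delta_2}{1-\delta_1}=O((\frac{\log n+\sigma^2}{np})^{1/4})$, since $\gamma^2\lesssim(\frac{\sigma^2+1}{np})^2$ is dominated by $\frac{\log n+\sigma^2}{np}$. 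There is no serious obstacle remaining at this stage — all the analytic difficulty has been absorbed into Lemma \ref{lem:key} and Lemma \ref{lem:MLE-ini}; the only points requiring care are verifying the fixed-point identity under the zero-denominator convention, and tracking that the $\gamma$ produced by the crude bound is small enough that the resulting $\delta_1$ stays $o(1)$ (and in particular below $1$) so the rearrangement is valid.
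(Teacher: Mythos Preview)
Your proposal is correct and follows exactly the approach the paper takes: establish the fixed-point identity $\wh z=f(\wh z)$, use Lemma~\ref{lem:MLE-ini} to place $\wh z$ in the region $\ell(\wh z,z^*)\le\gamma n$ with $\gamma=o(1)$, apply Lemma~\ref{lem:key}, and rearrange. Your probability accounting and the verification that $\gamma^2\lesssim\frac{\log n+\sigma^2}{np}$ (so that $\delta_1$, hence $\delta$, has the stated order) are also correct.
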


To close this section, we briefly discuss the implication of Lemma \ref{lem:key} on semi-definite programming (SDP) when the graph is fully connected. In other words, we only consider $p=1$. This is by leveraging a recent result on the connection between SDP and MLE by \cite{zhong2018near}.
Recall the definition of SDP in (\ref{eq:SDP}).
The following result is a special case of Theorem 5 of \cite{zhong2018near}.

\begin{thm}[\cite{zhong2018near}]\label{thm:MLE-SDP}
Assume $\sigma^2=O\left(\frac{n}{\log n}\right)$. Then, with probability at least $1-n^{-1}$, the SDP (\ref{eq:SDP}) admits a unique solution $\wh{z}\wh{z}^{\H}$, where $\wh{z}$ is a global optimum of (\ref{eq:MLE}) with all $A_{jk}=1$.
\end{thm}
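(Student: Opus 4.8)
This is essentially Theorem~5 of \cite{zhong2018near}; here is the dual-certificate (primal--dual) argument I would run. Since both the SDP (\ref{eq:SDP}) and the program (\ref{eq:MLE}) (with $A\equiv1$) are invariant under $z\mapsto az$, $a\in\mathbb{C}_1$, fix a global maximizer $\wh z$ of (\ref{eq:MLE}) and, after rotating the global phase, assume $z^{*\H}\wh z=|z^{*\H}\wh z|\ge0$, so that $\ell(\wh z,z^*)=\|\wh z-z^*\|^2=2n-2z^{*\H}\wh z$, i.e.\ $z^{*\H}\wh z=n-\tfrac12\ell(\wh z,z^*)$. Being the coordinatewise maximizer, $\wh z$ is a fixed point of the power map, $\wh z=f(\wh z)$, so $(Y\wh z)_j=|(Y\wh z)_j|\,\wh z_j$ for all $j$, and in particular $\bar{\wh z}_j(Y\wh z)_j=|(Y\wh z)_j|\ge0$. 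Put $\Lambda=\diag\big(|(Y\wh z)_1|,\dots,|(Y\wh z)_n|\big)$ and $S=\Lambda-Y$; then $S$ is Hermitian and $(S\wh z)_j=|(Y\wh z)_j|\wh z_j-(Y\wh z)_j=0$, so $S\wh z=0$ and $\wh z^{\H}S=0$. For any SDP-feasible $Z$ we have $\Tr(YZ)=\Tr(\Lambda Z)-\Tr(SZ)=\sum_j\Lambda_{jj}-\Tr(SZ)$, while $\Tr(Y\wh z\wh z^{\H})=\wh z^{\H}Y\wh z=\wh z^{\H}\Lambda\wh z=\sum_j\Lambda_{jj}$. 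Hence if $S\succeq0$ then $\wh z\wh z^{\H}$ is optimal, and if moreover $\lambda_2(S)>0$ (equivalently $\ker S=\Span(\wh z)$) then every optimal $Z$ has $\Tr(SZ)=0$, forcing $SZ=0$ and $Z=\wh z\wh z^{\H}$. Thus the whole theorem reduces to the single inequality
\[
\min_{v\perp\wh z,\ \|v\|=1}v^{\H}Sv>0 .
\]

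For $v\perp\wh z$ with $\|v\|=1$, writing $Y=z^*z^{*\H}+\sigma W$ and using $z^{*\H}v=(z^*-\wh z)^{\H}v$,
\[
v^{\H}Sv=\sum_{j=1}^n|(Y\wh z)_j|\,|v_j|^2-|z^{*\H}v|^2-\sigma v^{\H}Wv\ \ge\ \min_j|(Y\wh z)_j|-\ell(\wh z,z^*)-\sigma\opnorm W .
\]
Here $\ell(\wh z,z^*)\le C(\sigma^2+1)=o(n)$ by the crude bound of Lemma~\ref{lem:MLE-ini} (with $p=1$) under $\sigma^2=O(n/\log n)$, and $\sigma\opnorm W\lesssim\sqrt{\sigma^2 n}=o(n)$ by the standard random-matrix estimate $\opnorm W\lesssim\sqrt n$ (valid with high probability). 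Since $(Y\wh z)_j=z^*_j z^{*\H}\wh z-\wh z_j+\sigma(W\wh z)_j$,
\[
\min_j|(Y\wh z)_j|\ \ge\ z^{*\H}\wh z-1-\sigma\|W\wh z\|_\infty\ =\ n-\tfrac12\ell(\wh z,z^*)-1-\sigma\|W\wh z\|_\infty ,
\]
so $v^{\H}Sv\ge n-o(n)-\sigma\|W\wh z\|_\infty$, and everything reduces to the entrywise bound $\sigma\|W\wh z\|_\infty\le(1-\varepsilon)n$ for some fixed $\varepsilon>0$.

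\paragraph{The main obstacle.} This entrywise bound is the crux, since $\wh z$ depends on all of $W$ and the naive estimate $\|W\wh z\|_\infty\le\opnorm W\,\|\wh z\|\lesssim n$ is far too weak. The plan is a leave-one-out decoupling: for each $j$, let $\wh z^{(j)}$ be a maximizer of $z\mapsto z^{\H}Y^{(j)}z$ over $\mathbb{C}_1^n$, where $Y^{(j)}$ is $Y$ with its $j$th row and column set to zero. Then $(\wh z^{(j)})_k$ for $k\ne j$ is independent of the $j$th row of $W$, so conditionally $\sum_{k\ne j}W_{jk}(\wh z^{(j)})_k\sim\cn(0,n-1)$, and a union bound gives $\max_j\big|\sum_{k\ne j}W_{jk}(\wh z^{(j)})_k\big|\lesssim\sqrt{n\log n}$ with high probability. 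One then proves the leave-one-out stability $\big\|(\wh z-\wh z^{(j)})_{-j}\big\|\lesssim 1$ by a perturbation analysis of the fixed-point identities $\wh z=f(\wh z)$ and $\wh z^{(j)}=f^{(j)}(\wh z^{(j)})$ (the two power maps differ by $O((1+\sigma\sqrt{\log n})/n)$ in each coordinate $k\ne j$, and the loss is locally contracted in the spirit of Lemma~\ref{lem:key}), which gives $\big|(W\wh z)_j\big|\le\big|\sum_{k\ne j}W_{jk}(\wh z^{(j)})_k\big|+O(\sqrt n)\lesssim\sqrt{n\log n}$ uniformly in $j$. Hence $\sigma\|W\wh z\|_\infty\lesssim\sigma\sqrt{n\log n}\le(1-\varepsilon)n$ provided the constant implied by $\sigma^2=O(n/\log n)$ is small enough (and this is $o(n)$ when $\sigma^2=o(n/\log n)$) --- this is precisely where the hypothesis $\sigma^2=O(n/\log n)$, rather than the weaker $\sigma^2=o(n)$ of Theorem~\ref{thm:intro}, is needed. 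A union bound over the random-matrix event, the event of Lemma~\ref{lem:MLE-ini}, and the $O(n)$ Gaussian and leave-one-out events puts all of the above on an event of probability at least $1-n^{-1}$; there $S$ is a valid dual certificate of rank $n-1$, so $\wh z\wh z^{\H}$ is the unique solution of (\ref{eq:SDP}). The genuinely hard step is this leave-one-out argument: upgrading an $\ell_2$ control of $\wh z$ to the entrywise control of $\sigma\|W\wh z\|_\infty$.
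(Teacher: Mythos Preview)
The paper does not prove Theorem~\ref{thm:MLE-SDP} at all; it is quoted verbatim as ``a special case of Theorem~5 of \cite{zhong2018near}'' and used as a black box to derive Corollary~\ref{cor:SDP}. Your sketch is therefore not to be compared with anything in the present paper, but it does faithfully reconstruct the dual-certificate argument of \cite{zhong2018near}: build $S=\Lambda-Y$ from the fixed-point identity $\wh z=f(\wh z)$, reduce optimality and uniqueness of $\wh z\wh z^{\H}$ to $\lambda_2(S)>0$, and then reduce that to the entrywise control $\sigma\|W\wh z\|_\infty\lesssim\sigma\sqrt{n\log n}$ via a leave-one-out decoupling. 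Your identification of the leave-one-out stability $\|(\wh z-\wh z^{(j)})_{-j}\|=O(1)$ as the genuinely hard step, and of the condition $\sigma^2=O(n/\log n)$ as the place where the extra $\log n$ enters, is exactly right. Two cosmetic remarks: (i) since $Y_{jj}=0$ one has $Y=z^*z^{*\H}-I_n+\sigma W$, so your formula for $v^{\H}Sv$ is missing a harmless $+1$; (ii) the stated theorem allows an arbitrary constant in $\sigma^2=O(n/\log n)$, whereas your final inequality $\sigma\sqrt{n\log n}\le(1-\varepsilon)n$ needs that constant below $1$ --- this is not a flaw in your argument but a looseness in how the result is phrased here relative to the precise constant in \cite{zhong2018near}.
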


This equivalence of SDP and MLE immediately implies the following result.

\begin{corollary}\label{cor:SDP}
Assume $\sigma^2=O\left(\frac{n}{\log n}\right)$ and $p=1$. Then, there exists some $\delta=o(1)$, such that the unique solution of the SDP can be written as $\wh{z}\wh{z}^{\H}$, where $\wh{z}$ satisfies
$$\ell(\wh{z},z^*) \leq (1+\delta)\frac{\sigma^2}{2},$$
with probability at least $1-2n^{-1}-\exp\left(-\left(\frac{n}{\sigma^2}\right)^{1/4}\right)$. In particular, $\delta$ can be chosen to satisfy $\delta=O\left(\left(\frac{\log n+\sigma^2}{n}\right)^{1/4}\right)$.
\end{corollary}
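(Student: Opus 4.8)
The plan is to obtain Corollary~\ref{cor:SDP} by simply chaining Theorem~\ref{thm:MLE-SDP} with Theorem~\ref{thm:MLE}, both specialized to $p=1$. First I would check that the hypotheses are mutually compatible. The assumption $\sigma^2=O\left(\frac{n}{\log n}\right)$ is exactly what Theorem~\ref{thm:MLE-SDP} requires, and it also forces $\sigma^2=o(n)$; combined with the trivially true condition $\frac{n}{\log n}\rightarrow\infty$, this places us inside the scope of Theorem~\ref{thm:MLE} with $p=1$ (there $\sigma^2=o(np)$ becomes $\sigma^2=o(n)$).

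Next, let $\mathcal{E}_1$ be the event that the SDP~(\ref{eq:SDP}) has a unique solution of the form $\wh{z}\wh{z}^{\H}$ with $\wh{z}$ a global maximizer of~(\ref{eq:MLE}) (with all $A_{jk}=1$), and let $\mathcal{E}_2$ be the event that this same $\wh{z}$ satisfies $\ell(\wh{z},z^*)\le(1+\delta)\frac{\sigma^2}{2}$ with the $\delta=O\left(\left(\frac{\log n+\sigma^2}{n}\right)^{1/4}\right)$ produced by Theorem~\ref{thm:MLE} at $p=1$. By Theorem~\ref{thm:MLE-SDP}, $\Prob(\mathcal{E}_1^{\complement})\le n^{-1}$; by Theorem~\ref{thm:MLE}, $\Prob(\mathcal{E}_2^{\complement})\le n^{-1}+\exp\left(-\left(\frac{n}{\sigma^2}\right)^{1/4}\right)$. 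A union bound then gives $\Prob(\mathcal{E}_1\cap\mathcal{E}_2)\ge 1-2n^{-1}-\exp\left(-\left(\frac{n}{\sigma^2}\right)^{1/4}\right)$, and on $\mathcal{E}_1\cap\mathcal{E}_2$ the SDP solution is precisely $\wh{z}\wh{z}^{\H}$ with $\wh{z}$ obeying the claimed error bound at the claimed rate. That is the entire argument.

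There is essentially no technical obstacle here beyond ensuring each cited statement is invoked under conditions it genuinely needs. The only mild subtlety worth a sentence is that Theorem~\ref{thm:MLE} nominally asks for the vanishing ratio $\sigma^2=o(np)$ rather than merely $O(\cdot)$, but with $p=1$ the hypothesis $\sigma^2=O\left(\frac{n}{\log n}\right)$ yields $\frac{\sigma^2}{n}=O\left(\frac{1}{\log n}\right)=o(1)$, so this is automatic; likewise the rate $\delta=O\left(\left(\frac{\log n+\sigma^2}{n}\right)^{1/4}\right)$ is inherited verbatim from Theorem~\ref{thm:MLE} by setting $p=1$.
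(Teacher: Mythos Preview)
Your proposal is correct and matches the paper's own argument, which simply states that the equivalence of SDP and MLE (Theorem~\ref{thm:MLE-SDP}) combined with Theorem~\ref{thm:MLE} immediately yields the corollary. Your explicit union bound and verification that $\sigma^2=O(n/\log n)$ implies $\sigma^2=o(n)$ are exactly the details the paper leaves implicit.
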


Corollary \ref{cor:SDP} requires a stronger condition $\sigma^2=O\left(\frac{n}{\log n}\right)$ than what is needed by Theorem \ref{thm:power} and Theorem \ref{thm:MLE}  when $p=1$. This condition is needed for the equivalence between SDP and MLE, which is established via an $\ell_{\infty}$ norm argument in \citep{zhong2018near} and is thus very unlikely to be weakened. Whether the equivalence between SDP and MLE continues to hold when $p<1$ is a less clear issue in the literature. In general, the solution to the SDP (\ref{eq:SDP}) is not necessarily a rank-one matrix. Obtaining an estimator in $\mathbb{C}_1^n$ requires a post-processing step such as the rounding method suggested by \cite{javanmard2016phase}, the consequence of which is unclear to us.

\section{Minimax Lower Bound}\label{sec:lower}

We study the minimax lower bound of phase synchronization in this section. The minimax risk is given by
\begin{equation}
\inf_{\wh{z}\in\mathbb{C}_1^n}\sup_{z\in\mathbb{C}_1^n}\mathbb{E}_z\ell(\wh{z},z)=\inf_{\wh{z}\in\mathbb{C}_1^n}\sup_{z\in\mathbb{C}_1^n}\mathbb{E}_z\left[\min_{a\in\mathbb{C}_1}\sum_{j=1}^n|\wh{z}_j a-z_j|^2\right], \label{eq:lower-1st}
\end{equation}
where $\mathbb{E}_z$ is the expectation of $(A\circ Y,A)$ under the model $A_{jk}Y_{jk}=A_{jk}z_j\bar{z}_k+\sigma A_{jk}W_{jk}$ and $A_{jk}\sim\text{Bernoulli}(p)$. The main difficulty of analyzing (\ref{eq:lower-1st}) is that the loss function $\ell(\wh{z},z)$ is not separable in $j\in[n]$ because of the minimization over $\theta\in\mathbb{R}$. This difficulty can be tackled with the following inequality
\begin{equation}
\ell(z,z^*)\geq \frac{1}{2n}\fnorm{\wh{z}\wh{z}^{\H}-zz^{\H}}^2, \label{eq:clever}
\end{equation}
which is proved in Lemma \ref{lem:loss-equiv}. Since the right hand side of (\ref{eq:clever}) can be written as $\sum_{1\leq j\neq k\leq n}|\wh{z}_j\bar{\wh{z}}_k-z_j\bar{z}_k|^2$, we can lower bound (\ref{eq:lower-1st}) by
\begin{eqnarray}
\nonumber && \inf_{\wh{z}\in\mathbb{C}_1^n}\sup_{z\in\mathbb{C}_1^n}\mathbb{E}_z\ell(\wh{z},z) \\
\nonumber &\geq& \frac{1}{2n}\inf_{\wh{z}\in\mathbb{C}_1^n}\sup_{z\in\mathbb{C}_1^n}\mathbb{E}_z\fnorm{\wh{z}\wh{z}^{\H}-zz^{\H}}^2 \\
\nonumber &\geq& \frac{1}{2n}\inf_{\wh{z}}\sum_{1\leq j\neq k\leq n}\left(\int\prod_{l=1}^n\pi(z_l)\right)\mathbb{E}_z|\wh{z}_j\bar{\wh{z}}_k-z_j\bar{z}_k|^2\diff z \\
\label{eq:lamp} &\geq& \frac{1}{2n}\sum_{1\leq j\neq k\leq n}\mathbb{E}_{z_{-(j,k)}\sim\pi}\inf_{\wh{T}}\int \int \pi(z_j)\pi(z_k)\mathbb{E}_z|\wh{T}-z_j\bar{z}_k|^2\diff z_j\diff z_k,
\end{eqnarray}
where $\pi$ is some density function supported on $\mathbb{C}_1$ to be specified later, and the notation $\mathbb{E}_{z_{-(j,k)}}$ means expectation over $z$ except for its $j$th and $k$th entries with respect to the distribution $\pi$. Therefore, it is sufficient to lower bound $\inf_{\wh{T}}\int\int \pi(z_j)\pi(z_k)\mathbb{E}_z|\wh{T}-z_j\bar{z}_k|^2\diff z_j\diff z_k$ for all $j\neq k$. This corresponds to the problem of estimating $z_j\bar{z}_k$ with other entries of $z$ assumed to be known.

By symmetry, we consider the problem with $j=1$ and $k=2$. Given the knowledge of $z_3,\cdots,z_n$, we can decompose the likelihood function as
\begin{eqnarray*}
p(A\circ Y, A) &=& p(A)p(A_{12}Y_{12}|A)\left(\prod_{j=3}^np(A_{1j}Y_{1j}|A)p(A_{2j}Y_{2j}|A)\right) \\
&& \times \prod_{3\leq j<k\leq n}p(A_{jk}Y_{jk}|A).
\end{eqnarray*}
Since $p(A)\prod_{3\leq j<k\leq n}p(A_{jk}Y_{jk}|A)$ is independent of $z_1\bar{z}_2$,
the sufficient statistics for estimating $z_1\bar{z}_2$ is $A_{12}Y_{12}, A_{13}Y_{13},\cdots, A_{1n}Y_{1n}$ and $A_{23}Y_{23},\cdots, A_{2n}Y_{2n}$. Now we restrict $z_j\sim \pi$ by requiring $z_j=a+\sqrt{1-a^2}i$ with $a\sim f$ for some density $f$ supported on the unit interval $[0,1]$. Then, we can represent $z_1$ and $z_2$ by
$$z_1=a+\sqrt{1-a^2}i\quad\text{and}\quad z_2=c+\sqrt{1-c^2}i.$$
Define
\begin{eqnarray*}
T(a,c) &=& ac+\sqrt{1-a^2}\sqrt{1-c^2}, \\
S(a,c) &=& \sqrt{1-a^2}c-a\sqrt{1-c^2}.
\end{eqnarray*}
With this notation, we have
$$z_1\bar{z}_2=T(a,c)+S(a,c)i.$$
The likelihood $p(A_{12}Y_{12}|A)$ corresponds to the distribution
\begin{equation}
\n\left(A_{12}\begin{pmatrix}
T(a,c) \\
S(a,c)
\end{pmatrix}, \frac{1}{2}A_{12}\sigma^2I_2\right). \label{eq:cond1}
\end{equation}
For $j=3,\cdots,n$, we have $Y_{1j}=z_1\bar{z}_j+\sigma W_{1j}$. Since $z_j$ is known for $j=3,\cdots,n$, observing $Y_{1j}$ is equivalent to observing $Y_{1j}z_j=z_1+\sigma W_{1j}z_j$, and we still have $W_{1j}z_j\sim\cn(0,1)$ by the property of complex Gaussian distribution. This argument implies that the likelihood $\prod_{j=3}^np(A_{1j}Y_{1j}|A)$ is proportional to the density function of
\begin{equation}
\n\left(\sum_{j=3}^nA_{1j}\begin{pmatrix}
a \\
\sqrt{1-a^2}
\end{pmatrix},\frac{\sigma^2\sum_{j=3}^nA_{1j}}{2}I_2\right). \label{eq:cond2}
\end{equation}
Similarly, $\prod_{j=3}^np(A_{2j}Y_{2j}|A)$ is proportional to the density function of
\begin{equation}
\n\left(\sum_{j=3}^nA_{2j}\begin{pmatrix}
c \\
\sqrt{1-c^2}
\end{pmatrix},\frac{\sigma^2\sum_{j=3}^nA_{2j}}{2}I_2\right). \label{eq:cond3}
\end{equation}
Therefore, we have the identity
\begin{eqnarray}
\nonumber && \inf_{\wh{T}}\int \int \pi(z_1)\pi(z_2)\mathbb{E}_z|\wh{T}-z_1\bar{z}_2|^2\diff z_1\diff z_2 \\
\label{eq:mse-bayes} &=& \inf_{\wh{T},\wh{S}}\int \int f(a)f(c)\mathbb{E}_{(a,c)}\left[(\wh{T}-T(a,c))^2+(\wh{S}-S(a,c))^2\right]\diff a \diff c,
\end{eqnarray}
where the expectation $\mathbb{E}_{(a,c)}$ is under the product of the conditional distributions (\ref{eq:cond1}), (\ref{eq:cond2}) and (\ref{eq:cond3}) with marginal given by $A_{jk}\sim\text{Bernoulli}(p)$. The quantity (\ref{eq:mse-bayes}) can be lower bounded by van Trees' inequality \citep{gill1995applications}, and the result is given by the following lemma.

\begin{lemma}\label{lem:lower-van-trees}
Assume $\sigma^2=o(np)$. Then, there exists some nice density function $f$ supported on the unit interval and some $\delta=o(1)$ such that
$$\inf_{\wh{T},\wh{S}}\int \int f(a)f(c)\mathbb{E}_{(a,c)}\left[(\wh{T}-T(a,c))^2+(\wh{S}-S(a,c))^2\right]\diff a \diff c \geq (1-\delta)\frac{\sigma^2}{np}.$$
In particular, $\delta$ can be chosen to satisfy $\delta=O\left(\frac{\sigma^2}{np}+\frac{1}{n}\right)$.
\end{lemma}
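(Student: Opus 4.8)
The plan is to bound the Bayes risk in (\ref{eq:mse-bayes}) by the multivariate van Trees (Bayesian Cram\'er--Rao) inequality of \cite{gill1995applications}, after passing to the angular parametrization. Setting $a=\cos\alpha$ and $c=\cos\beta$ with $\alpha,\beta\in[0,\pi/2]$, we have $z_1=e^{i\alpha}$, $z_2=e^{i\beta}$, and $(T,S)=(\cos(\alpha-\beta),\sin(\alpha-\beta))$ traces the unit circle. I would fix the prior $f$ on $[0,1]$ to be a smooth density vanishing to high enough order at $0$ and $1$ (e.g.\ $f\propto\{a(1-a)\}^2$) so that $f$ has finite Fisher information and the induced prior $\lambda(\alpha,\beta)=g(\alpha)g(\beta)$ on $(0,\pi/2)^2$ (the pushforward of $f\otimes f$ under $a\mapsto\arccos a$) is smooth, vanishes on the boundary, and has finite Fisher information; the fast vanishing near $a=1$ neutralizes the singularity of $a\mapsto\arccos a$ there.

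\emph{Step 1: Fisher information of the pair model.} I would compute the Fisher information of the observations (\ref{eq:cond1})--(\ref{eq:cond3}) for $(\alpha,\beta)$, marginalizing out $A$ (it is observed but its law is free of $(\alpha,\beta)$, so the model information equals $\mathbb{E}_A$ of the conditional one). Each real Gaussian coordinate has variance $\sigma^2/2$. The direct edge (\ref{eq:cond1}) has mean $A_{12}(\cos(\alpha-\beta),\sin(\alpha-\beta))$ and contributes $\frac{2A_{12}}{\sigma^2}\left(\begin{smallmatrix}1&-1\\-1&1\end{smallmatrix}\right)$; each of the $m_1=\sum_{j\ge 3}A_{1j}$ observations in (\ref{eq:cond2}) has a mean depending on $\alpha$ only, along the unit-speed curve $\alpha\mapsto(\cos\alpha,\sin\alpha)$, hence contributes $\frac{2}{\sigma^2}$ to the $(\alpha,\alpha)$ entry, and symmetrically the $m_2$ observations in (\ref{eq:cond3}) contribute to the $(\beta,\beta)$ entry. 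Taking $\mathbb{E}_A$ ($m_1,m_2\mapsto (n-2)p$, $A_{12}\mapsto p$) gives $\bar I=\frac{2p}{\sigma^2}\left(\begin{smallmatrix}n-1&-1\\-1&n-1\end{smallmatrix}\right)$, which has $(1,-1)$ as eigenvector with eigenvalue $\frac{2np}{\sigma^2}$ and $(1,1)$ with eigenvalue $\frac{2(n-2)p}{\sigma^2}$.

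\emph{Step 2: coordinatewise van Trees.} The crucial structural point is that $\nabla_{(\alpha,\beta)}T=\sin(\alpha-\beta)(-1,1)^{\top}$ and $\nabla_{(\alpha,\beta)}S=\cos(\alpha-\beta)(1,-1)^{\top}$ are both parallel to the top eigenvector of $\bar I$. Applying van Trees to $\wh T$ with test vector field $C=\bar I^{-1}\nabla T$ and to $\wh S$ with $C=\bar I^{-1}\nabla S$, and using $\nabla T^{\top}\bar I^{-1}\nabla T=\frac{\sigma^2}{np}\sin^2(\alpha-\beta)$ and $\nabla S^{\top}\bar I^{-1}\nabla S=\frac{\sigma^2}{np}\cos^2(\alpha-\beta)$, I would get
\[
\mathbb{E}(\wh T-T)^2\ \ge\ \frac{c^2u^2}{cu+\kappa_T},\qquad \mathbb{E}(\wh S-S)^2\ \ge\ \frac{c^2v^2}{cv+\kappa_S},
\]
where $c=\sigma^2/(np)$, $u=\mathbb{E}\sin^2(\alpha-\beta)$, $v=\mathbb{E}\cos^2(\alpha-\beta)=1-u$, and $\kappa_T,\kappa_S=O(c^2)$ are the squared $L^2$-norms of the prior-correction terms $h=\sum_i\partial_i(\lambda C_i)/\lambda$ (here $C=O(c)$ and $\lambda$ has finite Fisher information). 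Since $\frac{c^2u^2}{cu+\kappa_T}\ge cu-\kappa_T$, adding the two bounds and using $u+v=1$ yields
\[
\mathbb{E}(\wh T-T)^2+\mathbb{E}(\wh S-S)^2\ \ge\ c-\kappa_T-\kappa_S\ =\ \frac{\sigma^2}{np}\bigl(1-O(\tfrac{\sigma^2}{np})\bigr),
\]
which by (\ref{eq:mse-bayes}) is exactly the claimed bound with $\delta=O(\sigma^2/(np)+1/n)$ ($=o(1)$ under $\sigma^2=o(np)$); the extra $1/n$ absorbs the $n$-versus-$(n-2)$ slack and the treatment of the randomness of $m_1,m_2$ (handled either by the exact marginalization above or by restricting to a high-probability event on $A$).

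\emph{Main obstacle.} The mechanics of van Trees are routine; the real difficulty is pinning down the \emph{exact} leading constant $\frac{\sigma^2}{np}$ (not merely a constant multiple). Three ingredients must align: (i) the angular reparametrization, which turns the information into a clean multiple of $\left(\begin{smallmatrix}n-1&-1\\-1&n-1\end{smallmatrix}\right)$ and makes the $\tfrac{1}{1-a^2}$-type factors that appear in the $(a,c)$ coordinates cancel against the Jacobian of $(T,S)$ (because $z_1\mapsto z_1\bar z_2$ is a rigid rotation for fixed $z_2$); (ii) choosing the test vector fields along $\nabla T$ and $\nabla S$, which happen to coincide with the top eigendirection $(1,-1)$ of $\bar I$, so no constant is lost to a spectral gap; and (iii) the identity $\sin^2+\cos^2=1$, which makes the prior-dependent factors in the two coordinatewise bounds sum to $1$ regardless of the choice of $f$. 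In addition one must verify the van Trees regularity hypotheses for the chosen prior and estimate the prior-correction terms sharply enough to confirm they are genuinely of lower order.
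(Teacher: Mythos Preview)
Your proposal is correct and reaches the same van Trees lower bound as the paper, but by a cleaner route. The paper stays in the $(a,c)$ parametrization, where the Fisher information $B=B_1+B_2$ has the parameter-dependent block $B_2=\frac{2(n-2)p}{\sigma^2}\mathrm{diag}\bigl(\tfrac{1}{1-a^2},\tfrac{1}{1-c^2}\bigr)$; it then (i) restricts the prior to $[0.4,0.6]$ so these entries stay bounded, (ii) drops $B_1$ and controls the resulting perturbation $\fnorm{B_2^{1/2}(B_1+B_2)^{-1}B_2^{1/2}-I_2}\lesssim n^{-1}$, and (iii) verifies by a direct (and somewhat opaque) calculation that $\Tr(AB_2^{-1}A^\T)$ produces the right constant. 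Your passage to angles $(\alpha,\beta)$ diagonalizes all of this at once: the Fisher information becomes the \emph{constant} matrix $\bar I=\frac{2p}{\sigma^2}\bigl(\begin{smallmatrix}n-1&-1\\-1&n-1\end{smallmatrix}\bigr)$, so no prior truncation or $B_1$-vs-$B_2$ approximation is needed; the fact that $\nabla T$ and $\nabla S$ both lie in the $(1,-1)$ eigendirection makes the constant $\frac{\sigma^2}{np}$ drop out transparently; and the identity $\sin^2+\cos^2=1$ replaces the paper's trace computation. Your additive trick $\frac{c^2u^2}{cu+\kappa}\ge cu-\kappa$ is a neat way to decouple the two scalar van Trees bounds without needing $u$ or $v$ individually bounded away from zero. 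One small remark: with your full $\bar I$ the relevant eigenvalue is exactly $2np/\sigma^2$, so in fact your argument delivers $\delta=O(\sigma^2/(np))$ directly, and the $1/n$ in the lemma's stated $\delta$ (which in the paper arises from the $B_1$ perturbation and the $(n-2)$-vs-$n$ mismatch) is not even needed. Just make sure, when writing it up, to check explicitly that $g(\alpha)=f(\cos\alpha)\sin\alpha$ has finite Fisher information and that $\lambda C_i$ vanishes at the boundary of $(0,\pi/2)^2$ for your chosen $f$; your polynomial example does satisfy both.
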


Plugging the result of Lemma \ref{lem:lower-van-trees} into (\ref{eq:lamp}), we have
immediately the following theorem, which establishes the lower bound (\ref{eqn:intro_1}) for Theorem \ref{thm:intro}.

\begin{thm}\label{thm:lower}
Assume $\sigma^2=o(np)$. Then, there exists some $\delta=o(1)$ such that
$$\inf_{\wh{z}\in\mathbb{C}_1^n}\sup_{z\in\mathbb{C}_1^n}\mathbb{E}_z\ell(\wh{z},z)\geq (1-\delta)\frac{\sigma^2}{2p}.$$
In particular, $\delta$ can be chosen to satisfy $\delta=O\left(\frac{\sigma^2}{np}+\frac{1}{n}\right)$.
\end{thm}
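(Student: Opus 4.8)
The plan is to prove Theorem \ref{thm:lower} by combining the reduction already displayed in (\ref{eq:lamp})--(\ref{eq:mse-bayes}) with the per-pair lower bound of Lemma \ref{lem:lower-van-trees}. First I would invoke Lemma \ref{lem:loss-equiv} to obtain the inequality (\ref{eq:clever}), namely $\ell(\wh z,z)\geq \frac{1}{2n}\fnorm{\wh z\wh z^{\H}-zz^{\H}}^2$, which converts the non-separable loss $\ell$ into a quantity that expands as $\frac{1}{2n}\sum_{1\leq j\neq k\leq n}|\wh z_j\bar{\wh z}_k-z_j\bar z_k|^2$. This reduces the minimax risk to controlling $\inf_{\wh z}\sup_z \mathbb{E}_z\fnorm{\wh z\wh z^{\H}-zz^{\H}}^2$ up to the factor $\frac{1}{2n}$.

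Next I would replace the supremum over $z\in\mathbb{C}_1^n$ by an average over a prior: take $z_1,\dots,z_n$ i.i.d.\ from a density $\pi$ supported on $\mathbb{C}_1$, where $\pi$ is induced by $z_j=a_j+\sqrt{1-a_j^2}\,i$ with $a_j\sim f$ on $[0,1]$, and $f$ is the ``nice'' density furnished by Lemma \ref{lem:lower-van-trees}. Since the supremum dominates the average, and since for a fixed estimator and fixed pair $(j,k)$ the expectation $\int\prod_l\pi(z_l)\,\mathbb{E}_z|\wh z_j\bar{\wh z}_k-z_j\bar z_k|^2\,\diff z$ is at least $\mathbb{E}_{z_{-(j,k)}}\inf_{\wh T}\int\int\pi(z_j)\pi(z_k)\mathbb{E}_z|\wh T-z_j\bar z_k|^2\,\diff z_j\,\diff z_k$ (the estimator $\wh z_j\bar{\wh z}_k$ is a particular choice of $\wh T$, and we may further take the infimum over all measurable $\wh T$ and condition on the remaining coordinates), the chain (\ref{eq:lamp}) is justified. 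Then, using the sufficiency and Gaussian reparametrization argument already carried out in the excerpt, the inner Bayes risk for the pair $(1,2)$ equals the quantity on the left-hand side of Lemma \ref{lem:lower-van-trees}, namely $\inf_{\wh T,\wh S}\int\int f(a)f(c)\mathbb{E}_{(a,c)}[(\wh T-T(a,c))^2+(\wh S-S(a,c))^2]\,\diff a\,\diff c$; by exchangeability of the coordinates the same bound holds for every pair $(j,k)$.

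Finally I would assemble the pieces: Lemma \ref{lem:lower-van-trees} gives that each of the $n(n-1)$ summands in (\ref{eq:lamp}) is at least $(1-\delta')\frac{\sigma^2}{np}$ with $\delta'=O\!\big(\frac{\sigma^2}{np}+\frac1n\big)$, so
\begin{align*}
\inf_{\wh z\in\mathbb{C}_1^n}\sup_{z\in\mathbb{C}_1^n}\mathbb{E}_z\ell(\wh z,z)
\;\geq\; \frac{1}{2n}\cdot n(n-1)\cdot (1-\delta')\frac{\sigma^2}{np}
\;=\;\frac{n-1}{n}\,(1-\delta')\,\frac{\sigma^2}{2p}
\;\geq\;(1-\delta)\frac{\sigma^2}{2p},
\end{align*}
for $\delta=\delta'+\frac1n=O\!\big(\frac{\sigma^2}{np}+\frac1n\big)=o(1)$ under the assumption $\sigma^2=o(np)$. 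This is exactly the claimed bound. The only non-routine inputs are (\ref{eq:clever}) and Lemma \ref{lem:lower-van-trees}, both of which may be assumed here; the remaining work is the bookkeeping of the reduction.

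I expect the main conceptual obstacle — were Lemma \ref{lem:lower-van-trees} not already granted — to be the van Trees step itself: choosing $f$ so that it is supported on a compact subinterval of $(0,1)$ (to keep $T,S$ and their derivatives well-behaved and bounded away from the degenerate boundary where $\sqrt{1-a^2}$ is non-smooth), computing the Fisher information of the Gaussian location family in (\ref{eq:cond1})--(\ref{eq:cond3}) — which scales like $\frac{2}{\sigma^2}(1 + \sum_{j\geq 3}A_{1j} + \sum_{j\geq 3}A_{2j})\approx \frac{2np}{\sigma^2}$ after taking expectation over $A$ — and verifying that the information contributed by the prior $f$ is a lower-order $O(1)$ correction, so that the resulting bound is $\frac{2}{\mathcal I}\approx\frac{\sigma^2}{np}$ with the sharp constant. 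Within the present proof, though, the only delicate point is making sure the passage from $\sup_z$ to the Bayes risk and then to the per-coordinate-conditional infimum in (\ref{eq:lamp}) is valid, which is standard.
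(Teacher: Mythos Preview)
Your proposal is correct and follows essentially the same route as the paper: reduce $\ell$ to $\frac{1}{2n}\fnorm{\cdot}^2$ via Lemma~\ref{lem:loss-equiv}, pass from the supremum to the i.i.d.\ prior average, condition on $z_{-(j,k)}$ to reach the per-pair Bayes risk (\ref{eq:lamp})--(\ref{eq:mse-bayes}), invoke Lemma~\ref{lem:lower-van-trees}, and then sum the $n(n-1)$ pairwise bounds to obtain $(1-\delta)\frac{\sigma^2}{2p}$. The paper in fact states Theorem~\ref{thm:lower} as an immediate consequence of plugging Lemma~\ref{lem:lower-van-trees} into (\ref{eq:lamp}), which is exactly your argument.
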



\section{Proofs}\label{sec:proof}

This section presents the proofs of all technical results in the paper. We first list some auxiliary lemmas in Section \ref{sec:tech}. The key lemma that leads to various upper bound results (Lemma \ref{lem:key}) is proved in Section \ref{sec:pf-key}. Then, we prove Lemma \ref{lem:lower-van-trees} in Section \ref{sec:pf-low}. Finally, the proofs of Lemma \ref{lem:spec-ini} and Lemma \ref{lem:MLE-ini} are given in Section \ref{sec:pf-ini}.

\subsection{Some Auxiliary Lemmas}\label{sec:tech}

\begin{lemma}\label{lem:ER-graph}
Assume $\frac{np}{\log n}\rightarrow\infty$. Then, there exists a constant $C>0$, such that
$$\max_{j\in[n]}\left(\sum_{k\in[n]\backslash\{j\}}(A_{jk}-p)\right)^2\leq Cnp\log n,$$
and
$$\opnorm{A-\mathbb{E}A}\leq C\sqrt{np},$$
with probability at least $1-n^{-10}$.
\end{lemma}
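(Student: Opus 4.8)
The plan is to prove the two displayed bounds separately: each is a standard concentration estimate for the adjacency matrix of an Erdos--Renyi graph in the regime $np/\log n\to\infty$, and it suffices to show that each holds with probability at least $1-\tfrac12 n^{-10}$ and then take a union bound.

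First I would handle the degree concentration. Fix $j\in[n]$; then $\sum_{k\in[n]\setminus\{j\}}(A_{jk}-p)$ is a sum of $n-1$ independent, mean-zero random variables, each bounded by $1$ in absolute value and with variance $p(1-p)\le p$, so Bernstein's inequality gives
\[
\Prob\left(\left|\sum_{k\in[n]\setminus\{j\}}(A_{jk}-p)\right|>t\right)\le 2\exp\left(-\frac{t^2/2}{(n-1)p+t/3}\right)
\]
for every $t>0$. Taking $t=\sqrt{C_0\,np\log n}$ for a sufficiently large absolute constant $C_0$ and using that $np\ge C_0\log n$ eventually (which also forces $t\le np$, so the denominator is at most $\tfrac43 np$), the right-hand side is at most $2n^{-3C_0/8}$; a union bound over $j\in[n]$ then yields $\max_{j\in[n]}\big(\sum_{k\ne j}(A_{jk}-p)\big)^2\le C_0\,np\log n$ with probability at least $1-2n^{1-3C_0/8}\ge 1-\tfrac12 n^{-10}$ once $C_0$ is large enough.

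For the operator-norm bound, write $M=A-\E A$, a symmetric matrix with zero diagonal whose above-diagonal entries are independent, mean-zero, bounded by $1$, and of variance at most $p$. The bound $\opnorm{M}\le C\sqrt{np}$ with probability $1-\tfrac12 n^{-10}$ when $np\gtrsim\log n$ is classical, and I would either cite a known adjacency-matrix concentration result (e.g. Feige--Ofek, Lu--Peng, Vu, or Bandeira--van Handel) or reprove it by the trace--moment method. The latter runs as follows: for an even integer $k\asymp\log n$, bound $\E\opnorm{M}^{2k}\le\E\Tr(M^{2k})=\sum_{w}\E\prod_{e\in w}M_e$, where the sum runs over closed walks $w$ of length $2k$ in $[n]$; since the entries are independent, mean-zero, and the diagonal vanishes, only walks in which every traversed edge appears at least twice contribute, and the Furedi--Komlos-type count of such walks gives $\E\Tr(M^{2k})\le n\,(C\sqrt{np})^{2k}$ for all even $k\le c\log n$ provided $np\gtrsim\log n$; Markov's inequality with $t$ a large multiple of $\sqrt{np}$ then finishes.

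I expect this last step to be the only genuine obstacle. A plain $\varepsilon$-net argument only controls the quadratic form $u^\T M u$ with sub-exponential (not sub-Gaussian) tails on the net and so yields a far weaker rate, and matrix Bernstein gives only $\opnorm{M}\lesssim\sqrt{np\log n}$; obtaining the sharp $\sqrt{np}$ rate genuinely requires the combinatorial moment estimate, or equivalently a Feige--Ofek-style splitting of the bilinear form into ``light'' and ``heavy'' pairs, with the heavy part handled via the degree concentration just established. Finally, a union bound over the two events gives the joint conclusion with failure probability at most $n^{-10}$.
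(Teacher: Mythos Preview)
Your proposal is correct and matches the paper's approach: the paper's own proof is a two-line citation, invoking Bernstein's inequality plus a union bound for the degree concentration and quoting Theorem~5.2 of Lei--Rinaldo (2015) for the sharp $\sqrt{np}$ operator-norm bound. Your additional sketch of the trace--moment argument and the discussion of why matrix Bernstein or a naive net argument fall short are accurate but go beyond what the paper provides.
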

\begin{proof}
The first result is a direct application of union bound and Bernstein's inequality. The second result is Theorem 5.2 of \cite{lei2015consistency}. 
\end{proof}

\begin{lemma}\label{lem:bandeira}
Assume $\frac{np}{\log n}\rightarrow\infty$. Then, there exists a constant $C>0$, such that
$$\opnorm{A\circ W}\leq C\sqrt{np},$$
with probability at least $1-n^{-10}$.
\end{lemma}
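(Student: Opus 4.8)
The natural route is to condition on the random graph $A$ and treat $A\circ W$ as a complex Hermitian Gaussian matrix whose variance profile is encoded by $A$; a variance‑profile bound for the operator norm, together with Gaussian concentration, then finishes the argument.

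First I would fix the event $\mathcal E_A$ of Lemma~\ref{lem:ER-graph} (which, by the usual Bernstein/union argument behind it, may be taken to hold with probability at least $1-n^{-r}$ for any fixed $r$ at the cost of a larger constant). On $\mathcal E_A$ every degree $d_j=\sum_{k\in[n]\setminus\{j\}}A_{jk}$ satisfies $d_j\le np+C\sqrt{np\log n}\le 2np$, the last step using $np/\log n\to\infty$. Now condition on $A$: the entries $(A\circ W)_{jk}=A_{jk}W_{jk}$ for $j<k$ are independent complex Gaussians (subject to $(A\circ W)_{kj}=\overline{(A\circ W)_{jk}}$ and vanishing diagonal), with $\Expect|(A\circ W)_{jk}|^2=A_{jk}\in\{0,1\}$. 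I would then invoke the Bandeira–van Handel bound for the expected operator norm of a Hermitian Gaussian matrix with variance profile $b_{jk}^2=A_{jk}$ (reducing, if one prefers a real statement, to a $2n\times 2n$ real symmetric matrix):
\[
\Expect\bigl[\opnorm{A\circ W}\bigm|A\bigr]\;\le\; C_1\Bigl(\max_{j\in[n]}\sqrt{\textstyle\sum_k b_{jk}^2}\;+\;\max_{j,k}|b_{jk}|\sqrt{\log n}\Bigr)\;=\;C_1\bigl(\sqrt{\max_j d_j}+\sqrt{\log n}\bigr),
\]
which on $\mathcal E_A$ is at most $C_2\sqrt{np}$ because $np\ge\log n$.

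For the deviation I would use that $W\mapsto\opnorm{A\circ W}$ is an $O(1)$‑Lipschitz function of the underlying real Gaussian coordinates (real and imaginary parts of $W_{jk}$ for $j<k$): indeed $\opnorm{\cdot}\le\fnorm{\cdot}$, and a Euclidean perturbation of size $\delta$ in those coordinates changes $\fnorm{A\circ W}$ by at most $\sqrt2\,\delta$. Gaussian concentration then gives, conditionally on $A$, $\Prob\bigl(\opnorm{A\circ W}\ge \Expect[\opnorm{A\circ W}\mid A]+t \bigm| A\bigr)\le e^{-ct^2}$; choosing $t\asymp\sqrt{\log n}\le\sqrt{np}$ makes the right‑hand side $\le n^{-11}$ while keeping $t= O(\sqrt{np})$. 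Integrating over $A\in\mathcal E_A$ and adding $\Prob(\mathcal E_A^{\complement})$ via a union bound yields $\opnorm{A\circ W}\le C\sqrt{np}$ with probability at least $1-n^{-10}$.

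I expect the only real content to be the second step: the hypothesis $np/\log n\to\infty$ is precisely what forces the bulk term $\sqrt{\max_j d_j}\asymp\sqrt{np}$ to dominate the logarithmic correction $\sqrt{\log n}$ that is intrinsic to sparse, bounded‑entry random matrices, so that the variance‑profile bound is tight up to constants (for $np\asymp\log n$ one only gets $\opnorm{A\circ W}= O(\sqrt{\log n})$, which is why the condition cannot be dropped). The remaining work — correctly handling the complex Hermitian structure in both the operator‑norm estimate and the Lipschitz constant, and the probability bookkeeping — is routine, though one must keep track of the factors of $2$.
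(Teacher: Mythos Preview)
Your proposal is correct and is essentially the same argument as the paper's: condition on $A$ lying in the good degree event, apply the Bandeira--van~Handel variance-profile bound (the paper cites it as Corollary~3.11 of \cite{bandeira2016sharp}, which already packages the expectation bound together with the Gaussian-Lipschitz tail), and then integrate out $A$. The only cosmetic difference is that the paper handles $\re(W)$ and $\im(W)$ separately and combines via $\opnorm{A\circ W}\lesssim\opnorm{A\circ\re(W)}+\opnorm{A\circ\im(W)}$, whereas you propose treating the complex Hermitian matrix directly; either works.
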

\begin{proof}
We use $\mathbb{P}_A$ for the conditional probability $\mathbb{P}(\cdot|A)$. Define the event
$$\mathcal{A}=\left\{\max_{j\in[p]}\sum_{k\in[n]\backslash\{j\}}A_{jk}\leq 2np\right\}.$$
Under the assumption $\frac{np}{\log n}\rightarrow\infty$, we have $\mathbb{P}(\mathcal{A}^c)\leq n^{-11}$ by Bernstein's inequality and a union bound argument. By Corollary 3.11 of \cite{bandeira2016sharp}, we have
$$\sup_{A\in\mathcal{A}}\mathbb{P}_A\left(\opnorm{A\circ \re(W)} > C_1\sqrt{np} + t\right)\leq e^{-t^2/2},$$
for some constant $C_1>0$. This implies that $\sup_{A\in\mathcal{A}}\mathbb{P}_A\left(\opnorm{A\circ \re(W)} > C_2\sqrt{np}\right)\leq n^{-11}$ for some constant $C_2>0$. Thus, we have
$$\mathbb{P}\left(\opnorm{A\circ \re(W)} > C_2\sqrt{np}\right)\leq \mathbb{P}(\mathcal{A}^c)+\sup_{A\in\mathcal{A}}\mathbb{P}_A\left(\opnorm{A\circ \re(W)} > C_2\sqrt{np}\right)\leq 2n^{-11}.$$
The same high probability bound also holds for $\opnorm{A\circ \im(W)}$. Finally, the desired conclusion is implied by $\opnorm{A\circ W}\lesssim \opnorm{A\circ \im(W)}+\opnorm{A\circ \re(W)}$.
\end{proof}

\begin{lemma}[Lemma 13 of \cite{gao2016optimal}]\label{lem:gaomalu}
Consider independent random variables $X_j\sim \n(0,1)$ and $E_j\sim\text{Bernoulli}(p)$. Then,
$$\mathbb{P}\left(\left|\sum_{j=1}^nX_jE_j/p\right|>t\right)\leq 2\exp\left(-\min\left(\frac{pt^2}{16n},\frac{pt}{2}\right)\right),$$
for any $t>0$.
\end{lemma}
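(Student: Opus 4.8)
The plan is a Chernoff bound built on an exact computation of the moment generating function of $S:=\sum_{j=1}^n X_j E_j/p$, followed by optimizing the resulting exponent over a suitably truncated range of the dual parameter. Since each $X_j$ is symmetric and independent of $E_j$, the law of $S$ is symmetric about $0$, so it is enough to bound $\mathbb{P}(S>t)$ and then double.

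First I would compute, using independence across $j$, conditioning on each $E_j$, and $\mathbb{E}\, e^{\lambda X_j/p}=e^{\lambda^2/(2p^2)}$ for a standard Gaussian,
\[
\mathbb{E}\, e^{\lambda S}=\prod_{j=1}^n\Big(1-p+p\,e^{\lambda^2/(2p^2)}\Big)=\Big(1+p\big(e^{\lambda^2/(2p^2)}-1\big)\Big)^n\le \exp\!\Big(np\big(e^{\lambda^2/(2p^2)}-1\big)\Big),
\]
where the last step uses $1+x\le e^x$. Restricting to $0<\lambda\le p$ forces $u:=\lambda^2/(2p^2)\le 1/2$, and on $[0,1]$ the elementary bound $e^u-1\le 2u$ gives $\mathbb{E}\, e^{\lambda S}\le e^{n\lambda^2/p}$. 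Hence, for every $\lambda\in(0,p]$, the Chernoff inequality yields $\mathbb{P}(S>t)\le \exp(-\lambda t+n\lambda^2/p)$.

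It remains to choose $\lambda$. The unconstrained minimizer of $-\lambda t+n\lambda^2/p$ is $\lambda^\star=tp/(2n)$, which lies in $(0,p]$ exactly when $t\le 2n$; in that regime taking $\lambda=\lambda^\star$ gives $\mathbb{P}(S>t)\le \exp(-pt^2/(4n))\le\exp(-pt^2/(16n))$. When $t>2n$ I would instead take the endpoint $\lambda=p$, which gives $\mathbb{P}(S>t)\le\exp(-p(t-n))\le\exp(-pt/2)$ because $t-n\ge t/2$ there. In both cases the bound is at most $\exp(-\min(pt^2/(16n),pt/2))$, since $pt^2/(4n)\ge pt^2/(16n)$ and $pt/2\ge\min(pt^2/(16n),pt/2)$; combining with the symmetrization gives the claimed two-sided bound with the factor $2$.

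This argument is essentially the standard Bernstein-type estimate for a mixture of Gaussians, so there is no substantial obstacle. The only point needing care is the choice of truncation level for the dual parameter: picking $\lambda\le p$ both keeps the approximation $e^u-1\le 2u$ valid and makes the admissible range of $\lambda$ switch over at exactly the scale $t\asymp n$ where the tail transitions from sub-Gaussian ($pt^2/n$) to sub-exponential ($pt$) behavior, which is what produces the $\min(\cdot,\cdot)$ form of the exponent.
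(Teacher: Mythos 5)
Your proof is correct, and I checked the constants go through: the MGF computation $\mathbb{E}e^{\lambda X_jE_j/p}=1-p+pe^{\lambda^2/(2p^2)}$ is exact, the bound $e^u-1\le 2u$ does hold on $[0,1]$ (so certainly for $u=\lambda^2/(2p^2)\le 1/2$ when $\lambda\le p$), the constrained optimum $\lambda^\star=tp/(2n)\le p$ exactly when $t\le 2n$ gives exponent $pt^2/(4n)\ge pt^2/(16n)$, and at the endpoint $\lambda=p$ with $t>2n$ one gets $p(t-n)\ge pt/2$; symmetry of each $X_jE_j/p$ justifies the factor $2$. Note that the paper itself does not prove this statement — it is quoted as Lemma 13 of the cited reference — so your argument supplies a self-contained derivation rather than competing with an in-paper proof; it is the standard Chernoff/Bernstein analysis for a Gaussian--Bernoulli product (sub-Gaussian regime for $t\lesssim n$, sub-exponential for larger $t$), and in fact yields the slightly stronger exponent $pt^2/(4n)$ in the quadratic regime, which is more than enough for the stated bound and for the way the lemma is used in the proof of Lemma \ref{lem:key}.
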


\begin{lemma}\label{lem:loss-equiv}
For any $z,z^*\in\mathbb{C}_1^n$, we have
$$n\ell(z,z^*)\leq \fnorm{zz^{\H}-z^*z^{*\H}}^2\leq 2n\ell(z,z^*).$$
\end{lemma}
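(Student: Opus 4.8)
The plan is to reduce both sides to explicit functions of the single scalar $t := |z^{\H}z^*|$ and then verify the inequalities algebraically. First I would compute the loss in closed form. Expanding $\sum_{j=1}^n|z_j a - z_j^*|^2 = n|a|^2 + n - 2\re(\bar a\, z^{\H}z^*)$ and using $|a|=1$, this equals $2n - 2\re(\bar a\, z^{\H}z^*)$; minimizing over $a\in\mathbb{C}_1$ (aligning the phase of $a$ with that of $z^{\H}z^*$) gives the identity
\begin{equation*}
\ell(z,z^*) = 2n - 2|z^{\H}z^*| = 2(n-t).
\end{equation*}

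Next I would compute the Frobenius quantity. Writing $M = zz^{\H} - z^*z^{*\H}$, which is Hermitian, we have $\fnorm{M}^2 = \Tr(M^2)$, and expanding using $z^{\H}z = z^{*\H}z^* = n$ and $\Tr(zz^{\H}z^*z^{*\H}) = |z^{\H}z^*|^2$ yields
\begin{equation*}
\fnorm{zz^{\H}-z^*z^{*\H}}^2 = 2n^2 - 2|z^{\H}z^*|^2 = 2(n-t)(n+t).
\end{equation*}

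With these two identities in hand, the claimed chain $n\ell(z,z^*) \le \fnorm{zz^{\H}-z^*z^{*\H}}^2 \le 2n\ell(z,z^*)$ becomes $2n(n-t) \le 2(n-t)(n+t) \le 4n(n-t)$. By Cauchy--Schwarz, $t = |z^{\H}z^*| \le \|z\|\|z^*\| = n$, so $n - t \ge 0$; if $n-t>0$ we divide through by $2(n-t)$ to reduce the claim to $n \le n+t \le 2n$, which is exactly $0 \le t \le n$, and if $n-t=0$ all three expressions vanish. This finishes the argument. I do not expect any real obstacle here: the lemma is essentially the two rank-one trace identities plus Cauchy--Schwarz, and the only points requiring a word of care are the degenerate case $t=n$ and the fact that the optimal global phase $a$ in the definition of $\ell$ can always be chosen to realize $\re(\bar a\, z^{\H}z^*) = |z^{\H}z^*|$.
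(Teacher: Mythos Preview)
Your proposal is correct and follows essentially the same route as the paper: both compute the closed forms $\ell(z,z^*)=2(n-|z^{\H}z^*|)$ and $\fnorm{zz^{\H}-z^*z^{*\H}}^2=2(n^2-|z^{\H}z^*|^2)$, then reduce the double inequality to $0\le |z^{\H}z^*|\le n$. The paper phrases the last step via the identity $\fnorm{zz^{\H}-z^*z^{*\H}}^2=\ell(z,z^*)\bigl(2n-\tfrac{1}{2}\ell(z,z^*)\bigr)$, but this is just a repackaging of your factorization $2(n-t)(n+t)$.
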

\begin{proof}
The definition of $\ell(z,z^*)$ implies
$$\ell(z,z^*)=2n-\sup_{|a|=1}\left({z}^{\H}z^*a+z^{*\H}z\bar{a}\right)=2\left(n-|z^{\H}z^*|\right).$$
By direct calculation,
$$\fnorm{zz^{\H}-z^*z^{*\H}}^2=2\left(n^2-|z^{\H}z^*|^2\right).$$
We thus obtain the relation
$$\fnorm{zz^{\H}-z^*z^{*\H}}^2=\ell(z,z^*)\left(n+|z^{\H}z^*|\right)=\ell(z,z^*)\left(2n-\frac{\ell(z,z^*)}{2}\right),$$
which immediately implies the conclusion.
\end{proof}

\begin{lemma}\label{lem:middle}
For any $x\in\mathbb{C}$ such that $\re(x)>0$, $\left|\frac{x}{|x|}-1\right|\leq \left|\frac{\im(x)}{\re(x)}\right|$.
\end{lemma}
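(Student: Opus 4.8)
The plan is to reduce the claim to an elementary real inequality. Write $x=a+bi$ with $a=\re(x)>0$ and $b=\im(x)$, so that $|x|=\sqrt{a^2+b^2}>0$ and $\frac{x}{|x|}-1=\left(\frac{a}{\sqrt{a^2+b^2}}-1\right)+\frac{b}{\sqrt{a^2+b^2}}i$. First I would compute the left-hand side squared by separating real and imaginary parts and expanding:
\[
\left|\frac{x}{|x|}-1\right|^2=\left(\frac{a}{\sqrt{a^2+b^2}}-1\right)^2+\frac{b^2}{a^2+b^2}=2-\frac{2a}{\sqrt{a^2+b^2}}=2\left(1-\frac{a}{\sqrt{a^2+b^2}}\right).
\]

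Next I would rationalize the quantity $1-\frac{a}{\sqrt{a^2+b^2}}=\frac{\sqrt{a^2+b^2}-a}{\sqrt{a^2+b^2}}$ by multiplying numerator and denominator by $\sqrt{a^2+b^2}+a$, which turns it into $\frac{b^2}{\sqrt{a^2+b^2}\,(\sqrt{a^2+b^2}+a)}$. Hence it suffices to show
\[
\frac{2b^2}{\sqrt{a^2+b^2}\,(\sqrt{a^2+b^2}+a)}\le\frac{b^2}{a^2}.
\]
The case $b=0$ is trivial (both sides vanish), so assume $b\neq 0$ and cancel $b^2>0$; the inequality becomes $2a^2\le\sqrt{a^2+b^2}\,(\sqrt{a^2+b^2}+a)$. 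Since $\sqrt{a^2+b^2}\ge a>0$, the right-hand side is at least $a(a+a)=2a^2$, which finishes the argument.

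There is essentially no obstacle here; the only point to keep track of is that the hypothesis $\re(x)>0$ is exactly what makes $|x|>0$ and what lets us use $\sqrt{a^2+b^2}\ge a$ as a \emph{useful} lower bound in the final step (this would fail without the positivity of $a$). One could alternatively argue geometrically by writing $x=|x|e^{i\theta}$ with $\theta\in(-\pi/2,\pi/2)$, so that the claim becomes $2|\sin(\theta/2)|\le|\tan\theta|$, which reduces to $\cos(\theta/2)\ge\cos\theta$ on that interval; but the direct computation above is cleaner to record.
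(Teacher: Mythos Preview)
Your proof is correct and follows essentially the same route as the paper: both write $x=a+bi$, compute $\left|\frac{x}{|x|}-1\right|^2=\dfrac{2b^2}{a^2+b^2+a\sqrt{a^2+b^2}}$, and then use $\sqrt{a^2+b^2}\ge a$ (valid since $a>0$) to bound this by $b^2/a^2$. The only difference is cosmetic: the paper reaches the key expression via $|x-|x||/|x|$ and leaves the final inequality implicit, whereas you expand the squared modulus directly and spell out the last step.
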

\begin{proof}
Let $x=a+bi$, and then
\begin{eqnarray*}
\left|\frac{x}{|x|}-1\right| &=& \frac{|x-|x||}{|x|} \\
&=& \frac{\left|a+bi-\sqrt{a^2+b^2}\right|}{|x|} \\
&=& \frac{\sqrt{(a-\sqrt{a^2+b^2})^2+b^2}}{\sqrt{a^2+b^2}} \\
&=& \sqrt\frac{2b^2}{a^2+b^2+a\sqrt{a^2+b^2}} \\
&\leq& \left|\frac{b}{a}\right| = \left|\frac{\im(x)}{\re(x)}\right|.
\end{eqnarray*}
The proof is complete.
\end{proof}

\begin{lemma}\label{lem:trivial}
For any $x\in\mathbb{C}\backslash\{0\}$ and any $y\in\mathbb{C}_1^n$, $\left|\frac{x}{|x|}-y\right|\leq 2|x-y|$.
\end{lemma}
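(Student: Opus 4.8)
The plan is to prove the slightly cleaner statement that $\left|\frac{x}{|x|}-y\right|\leq 2|x-y|$ for every $x\in\mathbb{C}\setminus\{0\}$ and every $y$ with $|y|=1$ (the $n$-fold version then follows coordinatewise, if needed). The idea is to insert $x$ as an intermediate point and control the two resulting pieces by triangle inequalities.

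First I would write, by the triangle inequality,
\[
\left|\frac{x}{|x|}-y\right|\leq \left|\frac{x}{|x|}-x\right| + |x-y|.
\]
Then I would simplify the first term: $\left|\frac{x}{|x|}-x\right| = \left|\frac{1}{|x|}-1\right|\,|x| = \bigl|1-|x|\bigr|$. Since $|y|=1$, this equals $\bigl||y|-|x|\bigr|$, which by the reverse triangle inequality is at most $|x-y|$. Substituting back gives $\left|\frac{x}{|x|}-y\right|\leq |x-y| + |x-y| = 2|x-y|$, as claimed.

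I expect no real obstacle here — the argument is a two-line chain of elementary inequalities, and the only thing to be careful about is that the reverse triangle inequality $\bigl||x|-|y|\bigr|\le |x-y|$ is applied in the right direction. If the statement is genuinely meant for $y\in\mathbb{C}_1^n$ and $x/|x|$ interpreted entrywise, one simply sums the squared scalar bounds over the $n$ coordinates.
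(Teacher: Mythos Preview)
Your proof is correct and follows exactly the same route as the paper: insert $x$ via the triangle inequality, rewrite $\left|\frac{x}{|x|}-x\right|$ as $\bigl|1-|x|\bigr|=\bigl||y|-|x|\bigr|$, and bound it by $|x-y|$ via the reverse triangle inequality. (The ``$\mathbb{C}_1^n$'' in the statement is a typo for $\mathbb{C}_1$; the lemma is purely scalar, so your coordinatewise remark is unnecessary but harmless.)
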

\begin{proof}
We have $\left|\frac{x}{|x|}-y\right|\leq \left|\frac{x}{|x|}-x\right|+|x-y|= ||x|-1|+|x-y|=||x|-|y||+|x-y|\leq 2|x-y|$.
\end{proof}

\subsection{Proof of Lemma \ref{lem:key}}\label{sec:pf-key}

We organize the proof into four steps. We first list a few high-probability events in Step 1. These events are assumed to be true in later steps. Step 2 provides an error decomposition of $\ell(f(z),z^*)$, and then each error term in the decomposition will be analyzed and bounded in Step 3. Finally, we combine the bounds and derive the desired result in Step 4.
\paragraph{Step 1: Some high-probability events.} By Lemma \ref{lem:ER-graph} and Lemma \ref{lem:bandeira}, we know that 
\begin{eqnarray}
\label{eq:hollow1} \min_{j\in[n]}\sum_{k\in[n]\backslash\{j\}}A_{jk} &\geq& (n-1)p - C\sqrt{np\log n}, \\
\label{eq:hollow2}\max_{j\in[n]}\sum_{k\in[n]\backslash\{j\}}A_{jk} &\leq& (n-1)p + C\sqrt{np\log n}, \\
\label{eq:hollow3}\opnorm{A-\mathbb{E}A} &\leq& C\sqrt{np}, \\
\label{eq:hollow4}\opnorm{A\circ W} &\leq& C\sqrt{np},
\end{eqnarray}
all hold with probability at least $1-n^{-9}$ for some constant $C>0$. In addition to (\ref{eq:hollow1})-(\ref{eq:hollow4}), we need two more high-probability inequalities. For $\rho$ that satisfies $\rho\rightarrow 0$ and $\frac{\rho^2 np}{\sigma^2}\rightarrow\infty$, we want to bound the random variable $\sum_{j=1}^n\mathbb{I}\left\{\frac{2\sigma}{np}\left|\sum_{k\in[n]\backslash\{j\}}A_{jk}W_{jk}z_k^*\right|>\rho\right\}$. The existence of such $\rho$ is guaranteed by the condition $\sigma^2=o(np)$, and the specific choice will be given later. We first bound its expectation by Lemma \ref{lem:gaomalu},
\begin{eqnarray*}
\sum_{j=1}^n\mathbb{P}\left\{\frac{2\sigma}{np}\left|\sum_{k\in[n]\backslash\{j\}}A_{jk}W_{jk}z_k^*\right|>\rho\right\} &\leq& \sum_{j=1}^n\mathbb{P}\left\{\frac{2\sigma}{np}\left|\sum_{k\in[n]\backslash\{j\}}A_{jk}\re(W_{jk}z_k^*)\right|>\frac{\rho}{2}\right\} \\
&& + \sum_{j=1}^n\mathbb{P}\left\{\frac{2\sigma}{np}\left|\sum_{k\in[n]\backslash\{j\}}A_{jk}\im(W_{jk}z_k^*)\right|>\frac{\rho}{2}\right\} \\
&\leq& 4n\exp\left(-\frac{\rho^2 np}{256\sigma^2}\right) + 4n\exp\left(-\frac{\rho np}{8\sigma}\right).
\end{eqnarray*}
By Markov inequality, we have
\begin{equation}
\sum_{j=1}^n\mathbb{I}\left\{\frac{2\sigma}{np}\left|\sum_{k\in[n]\backslash\{j\}}A_{jk}W_{jk}z_k^*\right|>\rho\right\} \leq \frac{4\sigma^2}{\rho^2p} \exp\left(-\frac{1}{16}\sqrt{\frac{\rho^2 np}{\sigma^2}}\right), \label{eq:hollow5}
\end{equation}
with probability at least
\begin{eqnarray*}
&& 1-\frac{\rho^2pn}{\sigma^2}\left(\exp\left(-\frac{\rho^2 np}{256\sigma^2}+\frac{1}{16}\sqrt{\frac{\rho^2 np}{\sigma^2}}\right) + \exp\left(-\frac{\rho np}{8\sigma}+\frac{1}{16}\sqrt{\frac{\rho^2 np}{\sigma^2}}\right)\right) \\
&\geq& 1-\frac{2\rho^2pn}{\sigma^2}\exp\left(-\frac{1}{16}\sqrt{\frac{\rho^2 np}{\sigma^2}}\right) \\
&\geq& 1-\exp\left(-\frac{1}{32}\sqrt{\frac{\rho^2 np}{\sigma^2}}\right).
\end{eqnarray*}
The second high-probability bound we need is for the random variable $\sum_{j=1}^n\left|\sum_{k\in[n]\backslash\{j\}}A_{jk}\im(W_{jk}z_k^*\bar{z}_j^*)\right|^2$. We first find its expectation. By direct calculation, we have
$$\sum_{j=1}^n\mathbb{E}\left|\sum_{k\in[n]\backslash\{j\}}A_{jk}\im(W_{jk}z_k^*\bar{z}_j^*)\right|^2=\frac{n(n-1)p}{2}.$$
To study its variance, we introduce the notation $\epsilon_{jk}=A_{jk}\im(W_{jk}z_k^*\bar{z}_j^*)$ and $\epsilon_j=\sum_{k\in[n]\backslash\{j\}}\epsilon_{jk}$. The random variable $\epsilon_{jk}$ satisfies the property
$$-\epsilon_{jk}=-A_{jk}\im(W_{jk}z_k^*\bar{z}_j^*)=A_{kj}\im(\bar{W}_{jk}\bar{z}_k^*z_j^*)=A_{jk}\im(W_{kj}z_j^*\bar{z}_k^*)=\epsilon_{kj}.$$
With the new notation, we have
\begin{equation}
\Var\left(\sum_{j=1}^n\epsilon_j^2\right)\leq \sum_{j=1}^n\left(\mathbb{E}\epsilon_j^4-(\mathbb{E}\epsilon_j^2)\right)+\sum_{1\leq j\neq l\leq n}\left(\mathbb{E}\epsilon_j^2\epsilon_l^2 - \mathbb{E}\epsilon_j^2\mathbb{E}\epsilon_l^2\right).\label{eq:var-bound}
\end{equation}
For any $j\neq l$, we have
\begin{eqnarray*}
\mathbb{E}\epsilon_j^2\epsilon_l^2 &=& \mathbb{E}\left(\sum_{k\in[n]\backslash\{j\}}\epsilon_{jk}\right)^2\left(\sum_{k\in[n]\backslash\{l\}}\epsilon_{lk}\right)^2 \\
&=& \sum_{k_1\in[n]\backslash\{j\}}\sum_{k_2\in[n]\backslash\{j\}}\mathbb{E}\epsilon_{jk_1}^2\epsilon_{lk_2}^2.
\end{eqnarray*}
Observe that $\mathbb{E}\epsilon_{jk_1}^2\epsilon_{lk_2}^2$ is either $\frac{3p}{4}$ or $\frac{p^2}{4}$, depending on whether or not $(j,k_1)$ and $(l,k_2)$ correspond to the same edge. Therefore
$$\mathbb{E}\epsilon_j^2\epsilon_l^2 =\frac{(n^2-2n)p^2}{4}+\frac{3p}{4},$$
for any $j\neq l$. We also have for any $j$,
\begin{eqnarray*}
\mathbb{E}\epsilon_j^4 &=& \mathbb{E}\left(\sum_{k\in[n]\backslash\{j\}}\epsilon_{jk}\right)^4 \\
&=& \sum_{k\neq [n]\backslash\{j\}}\mathbb{E}\epsilon_{jk}^4 + \sum_{k,l\in[n]\backslash\{j\}:k\neq l}\mathbb{E}\epsilon_{jk}^2\mathbb{E}\epsilon_{lk}^2 \\
&=& \frac{3(n-1)p}{4} + \frac{(n^2-3n+2)p^2}{4}.
\end{eqnarray*}
We plug the above results into the bound (\ref{eq:var-bound}), and we have
$$\Var\left(\sum_{j=1}^n\epsilon_j^2\right)\leq \frac{3n(n-1)p}{4} + \frac{3n(n-1)p}{4}\leq \frac{3n^2p}{2}.$$
Therefore, by Chebyshev inequality, we can conclude that with probability at least $1-(6n)^{-1}$,
\begin{equation}
\sum_{j=1}^n\left|\sum_{k\in[n]\backslash\{j\}}A_{jk}\im(W_{jk}z_k^*\bar{z}_j^*)\right|^2\leq \frac{n^2p}{2}\left(1+\frac{6}{\sqrt{np}}\right). \label{eq:hollow6}
\end{equation}
Using the same analysis, we also have
\begin{equation}
\sum_{j=1}^n\left|\sum_{k\in[n]\backslash\{j\}}A_{jk}\re(W_{jk}z_k^*\bar{z}_j^*)\right|^2\leq \frac{n^2p}{2}\left(1+\frac{6}{\sqrt{np}}\right), \label{eq:hollow7}
\end{equation}
with probability at least $1-(6n)^{-1}$. Finally, we conclude that the events (\ref{eq:hollow1}), (\ref{eq:hollow2}), (\ref{eq:hollow3}), (\ref{eq:hollow4}), (\ref{eq:hollow5}), (\ref{eq:hollow6}) and (\ref{eq:hollow7}) hold simultaneously with probability at least $1-(2n)^{-1}-\exp\left(-\frac{1}{32}\sqrt{\frac{\rho^2 np}{\sigma^2}}\right)$.

\paragraph{Step 2: Error decomposition.}

For any $z\in\mathbb{C}_1^n$ such that $\ell(z,z^*)\leq \gamma n$, we can write $\wh{z}=f(z)$ with each coordinate $\wh{z}_j=\wt{z}_j/|\wt{z}_j|$, where
$$\wt{z}_j=\frac{\sum_{k\in[n]\backslash\{j\}}A_{jk}Y_{jk}z_k}{\sum_{k\in[n]\backslash\{j\}}A_{jk}}.$$
The condition $\ell(z,z^*)\leq \gamma n$ implies there exists some $b\in\mathbb{C}_1$ such that $\|z-z^*b\|^2\leq \gamma n$. By direct calculation, we can write
\begin{eqnarray*}
\wt{z}_j\bar{z}_j^*\bar{b} &=& \frac{\sum_{k\in[n]\backslash\{j\}}A_{jk} z_j^* \bar{z}_k^* z_k}{\sum_{k\in[n]\backslash\{j\}}A_{jk}} \bar{z}_j^* \bar{b} + \frac{\sum_{k\in[n]\backslash\{j\}}A_{jk} W_{jk}z_k}{\sum_{k\in[n]\backslash\{j\}}A_{jk}} \bar{z}_j^* \bar{b}\\
&=& 1 + \frac{\sum_{k\in[n]\backslash\{j\}}A_{jk}\bar{z}_k^*\bar{b}(z_k-{z}_k^*b)}{\sum_{k\in[n]\backslash\{j\}}A_{jk}} + \frac{\sigma \sum_{k\in[n]\backslash\{j\}}A_{jk}W_{jk}(z_k-z_k^*b)\bar{z}_j^*\bar{b}}{\sum_{k\in[n]\backslash\{j\}}A_{jk}} \\
&& + \frac{\sigma \sum_{k\in[n]\backslash\{j\}}A_{jk}W_{jk}z_k^*\bar{z}_j^*}{\sum_{k\in[n]\backslash\{j\}}A_{jk}} \\
&=& 1 + \frac{1}{n-1}\sum_{k=1}^n\bar{z}_k^*\bar{b}(z_k-z_k^*b) - \frac{1}{n-1}\bar{z}_j^*\bar{b}(z_j-z_j^*b) \\
&& + \left(\frac{\sum_{k\in[n]\backslash\{j\}}A_{jk}\bar{z}_k^*\bar{b}(z_k-{z}_k^*b)}{\sum_{k\in[n]\backslash\{j\}}A_{jk}}-\frac{1}{n-1}\sum_{k\in[n]\backslash\{j\}}^n\bar{z}_k^*\bar{b}(z_k-z_k^*b)\right) \\
&& + \frac{\sigma \sum_{k\in[n]\backslash\{j\}}A_{jk}W_{jk}(z_k-z_k^*b)\bar{z}_j^*\bar{b}}{\sum_{k\in[n]\backslash\{j\}}A_{jk}} + \frac{\sigma \sum_{k\in[n]\backslash\{j\}}A_{jk}W_{jk}z_k^*\bar{z}_j^*}{\sum_{k\in[n]\backslash\{j\}}A_{jk}}.
\end{eqnarray*}
Now we define $a_0=1 + \frac{1}{n-1}\sum_{k=1}^n\bar{z}_k^*\bar{b}(z_k-z_k^*b)$ and $a=a_0/|a_0|$, and we have
$$\wt{z}_j\bar{z}_j^*\bar{a}\bar{b}=|a_0| - \frac{1}{n-1}\bar{z}_j^*\bar{a}\bar{b}(z_j-z_j^*b) + F_j+G_j+H_j,$$
where
\begin{eqnarray*}
F_j &=& \frac{\sum_{k\in[n]\backslash\{j\}}A_{jk}\bar{z}_k^*\bar{a}\bar{b}(z_k-{z}_k^*b)}{\sum_{k\in[n]\backslash\{j\}}A_{jk}}-\frac{1}{n-1}\sum_{k\in[n]\backslash\{j\}}^n\bar{z}_k^*\bar{a}\bar{b}(z_k-z_k^*b), \\
G_j &=& \frac{\sigma \sum_{k\in[n]\backslash\{j\}}A_{jk}W_{jk}(z_k-z_k^*b)\bar{z}_j^*\bar{a}\bar{b}}{\sum_{k\in[n]\backslash\{j\}}A_{jk}}, \\
H_j &=& \frac{\sigma \bar{a}\sum_{k\in[n]\backslash\{j\}}A_{jk}W_{jk}z_k^*\bar{z}_j^*}{\sum_{k\in[n]\backslash\{j\}}A_{jk}}.
\end{eqnarray*}
By Lemma \ref{lem:middle}, we have the bound
\begin{equation}
|\wh{z}_j-z_j^*ab|^2=|\wh{z}_j\bar{z}_j^*\bar{a}\bar{b}-1|^2 \leq \left|\frac{\im{(\wt{z}_j\bar{z}_j^*\bar{a}\bar{b})}}{\re{(\wt{z}_j\bar{z}_j^*\bar{a}\bar{b})}}\right|^2, \label{eq:ratio-im-re}
\end{equation}
whenever $\re{(\wt{z}_j\bar{z}_j^*\bar{a}\bar{b})}>0$ holds.
By Cauchy-Schwartz inequality, we have
\begin{equation}
|a_0|\geq 1-\frac{1}{n-1}\|z^*\|\|z-bz^*\|\geq 1-2\sqrt{\gamma}, \label{eq:ori-wisp}
\end{equation}
under the condition $\|z-z^*b\|^2\leq \gamma n$. We also have
$$\frac{1}{n-1}|\bar{z}_j^*\bar{a}\bar{b}(z_j-z_j^*b)|\leq\frac{1}{n-1}|z_j-z_j^*b|\leq \frac{\sqrt{\gamma n}}{n-1}.$$
Therefore, as long as $|F_j|\vee|G_j|\vee|H_j|\leq\rho$, we have
$$\re{(\wt{z}_j\bar{z}_j^*\bar{a}\bar{b})}\geq 1-3(\sqrt{\gamma}+\rho).$$
By (\ref{eq:ratio-im-re}), we obtain the bound
\begin{eqnarray*}
|\wh{z}_j-z_j^*ab|^2 &\leq& \left|\frac{\im{(\wt{z}_j\bar{z}_j^*\bar{a}\bar{b})}}{\re{(\wt{z}_j\bar{z}_j^*\bar{a}\bar{b})}}\right|^2\mathbb{I}\left\{|F_j|\vee|G_j|\vee|H_j|\leq\rho\right\} \\
&& + 4\mathbb{I}\left\{|F_j|\vee|G_j|\vee|H_j|>\rho\right\} \\
&\leq& \frac{\left|\im\left(- \frac{1}{n-1}\bar{z}_j^*\bar{a}\bar{b}(z_j-z_j^*b)\right)+\im(F_j)+\im(G_j)+\im(H_j)\right|^2}{(1-3(\sqrt{\gamma}+\rho))^2} \\
&& + 4\mathbb{I}\{|F_j|>\rho\} + 4\mathbb{I}\{|G_j|>\rho\} + 4\mathbb{I}\{|H_j|>\rho\} \\
&\leq& \frac{1+\eta}{(1-3(\sqrt{\gamma}+\rho))^2}\left|\im(H_j)\right|^2 + \frac{3(1+\eta^{-1})}{(1-3(\sqrt{\gamma}+\rho))^2}|\im(F_j)|^2 \\
&& + \frac{3(1+\eta^{-1})}{(1-3(\sqrt{\gamma}+\rho))^2}\left|\im\left(- \frac{1}{n-1}\bar{z}_j^*\bar{a}\bar{b}(z_j-z_j^*b)\right)\right|^2 \\
&& + \frac{3(1+\eta^{-1})}{(1-3(\sqrt{\gamma}+\rho))^2}|\im(G_j)|^2 + 4\mathbb{I}\{|F_j|>\rho\} + 4\mathbb{I}\{|G_j|>\rho\} + 4\mathbb{I}\{|H_j|>\rho\},
\end{eqnarray*}
for some $\eta=o(1)$ which will be specified later. Here the last inequality is due to the fact that $(x_1 + x_2)^2 = x_1^2 + x_2^2 + 2(\eta^{1/2} x_1)(\eta^{-1/2}x_2) \leq (1+\eta)x_1^2 + (1+\eta^{-1})x_2^2$ for any $x_1,x_2\in\mathr$.

\paragraph{Step 3: Analysis of each error term.}

Next, we will analyze the error terms $F_j$, $G_j$ and $H_j$ separately. By triangle inequality, (\ref{eq:hollow1}) and (\ref{eq:hollow2}), we have
\begin{eqnarray}
\nonumber |F_j| &\leq& \frac{\left|\sum_{k\in[n]\backslash\{j\}}(A_{jk}-p)\bar{z}_k^*\bar{a}\bar{b}(z_k-{z}_k^*b)\right|}{\sum_{k\in[n]\backslash\{j\}}A_{jk}} \\
\nonumber && + \left|p\sum_{k\in[n]\backslash\{j\}}\bar{z}_k^*\bar{a}\bar{b}(z_k-{z}_k^*b)\right|\left|\frac{1}{\sum_{k\in[n]\backslash\{j\}}A_{jk}}-\frac{1}{(n-1)p}\right| \\
\nonumber &\leq& \frac{2}{np}\left|\sum_{k\in[n]\backslash\{j\}}(A_{jk}-p)\bar{z}_k^*(z_k-{z}_k^*b)\right| + p\sqrt{n}\|z-bz^*\|\frac{2\left|\sum_{k\in[n]\backslash\{j\}}(A_{jk}-p)\right|}{n^2p^2} \\
\nonumber &\leq& \frac{2}{np}\left|\sum_{k\in[n]\backslash\{j\}}(A_{jk}-p)\bar{z}_k^*(z_k-{z}_k^*b)\right| + C_1\frac{\sqrt{p\log n}}{np}\|z-bz^*\|.
\end{eqnarray}
Using (\ref{eq:hollow3}), we have
\begin{eqnarray*}
\sum_{j=1}^n|F_j|^2 &\leq& \frac{8}{n^2p^2}\sum_{j=1}^n\left|\sum_{k\in[n]\backslash\{j\}}(A_{jk}-p)\bar{z}_k^*(z_k-{z}_k^*b)\right|^2 + C_1^2\frac{\log n}{np}\|z-bz^*\|^2 \\
&\leq& \frac{8}{n^2p^2}\opnorm{A-\mathbb{E}A}^2\|z-bz^*\|^2 + C_1^2\frac{\log n}{np}\|z-bz^*\|^2 \\
&\leq& C_2\frac{\log n}{np}\ell(z,z^*).
\end{eqnarray*}
The above bound also implies
$$\sum_{j=1}^n\mathbb{I}\{|F_j|>\rho\}\leq \rho^{-2}\sum_{j=1}^n|F_j|^2\leq \frac{C_2}{\rho^2}\frac{\log n}{np}\ell(z,z^*).$$
Similarly, we can also bound the error terms that depend on $G_j$. By (\ref{eq:hollow1}) and (\ref{eq:hollow4}), we have
\begin{eqnarray*}
\sum_{j=1}^n|G_j|^2 &\leq& \frac{2\sigma^2}{n^2p^2}\sum_{j=1}^n\left|\sum_{k\in[n]\backslash\{j\}}A_{jk}W_{jk}(z_k-z_k^*b)\right|^2 \\
&\leq& \frac{2\sigma^2}{n^2p^2}\opnorm{A\circ W}^2\|z-bz^*\|^2 \\
&\leq& C_3\frac{\sigma^2}{np}\ell(z,z^*),
\end{eqnarray*}
and thus
$$\sum_{j=1}^n\mathbb{I}\{|G_j|>\rho\}\leq \rho^{-2}\sum_{j=1}^n|G_j|^2\leq \frac{C_3}{\rho^2}\frac{\sigma^2}{np}\ell(z,z^*).$$
For the contribution of $H_j$, we use (\ref{eq:hollow1}) and (\ref{eq:hollow5}), and have
\begin{eqnarray*}
\sum_{j=1}^n\mathbb{I}\{|H_j|>\rho\} &\leq& \sum_{j=1}^n\mathbb{I}\left\{\frac{2\sigma}{np}\left|\sum_{k\in[n]\backslash\{j\}}A_{jk}W_{jk}z_k^*\right|>\rho\right\} \\
&\leq& \frac{4\sigma^2}{\rho^2p} \exp\left(-\frac{1}{16}\sqrt{\frac{\rho^2 np}{\sigma^2}}\right).
\end{eqnarray*}
Next, we study the main error term $|\im(H_j)|^2$. By (\ref{eq:hollow1}), we have
\begin{align*}
\sum_{j=1}^n|\im(H_j)|^2 \leq& \left(1+C_4\sqrt{\frac{\log n}{np}}\right)^2\frac{\sigma^2}{n^2p^2}\sum_{j=1}^n\left|\sum_{k\in[n]\backslash\{j\}}A_{jk}\im(W_{jk}z_k^*\bar{z}_j^*\bar{a})\right|^2 \\
 \leq& (1+\eta)\left(1+C_4\sqrt{\frac{\log n}{np}}\right)^2\frac{\sigma^2}{n^2p^2}\sum_{j=1}^n\left|\sum_{k\in[n]\backslash\{j\}}A_{jk}\im(W_{jk}z_k^*\bar{z}_j^*)\right|^2 \\
& + (1+\eta^{-1})\left(1+C_4\sqrt{\frac{\log n}{np}}\right)^2\frac{\sigma^2}{n^2p^2}\sum_{j=1}^n\left|\sum_{k\in[n]\backslash\{j\}}A_{jk}\re(W_{jk}z_k^*\bar{z}_j^*)\right|^2|\im(\bar{a})|^2.
\end{align*}
By (\ref{eq:ori-wisp}), we have
$$
|\im(\bar{a})| = \frac{|\im(a_0)|}{|a_0|}\leq\frac{\frac{1}{n-1}\|z^*\|\|z-bz^*\|}{1-2\sqrt{\gamma}}\leq 4\sqrt{\gamma}.
$$
Together with (\ref{eq:hollow6}) and (\ref{eq:hollow7}), we have
$$\sum_{j=1}^n|\im(H_j)|^2 \leq \left(1+C_5\left(\eta+\eta^{-1}\gamma+\sqrt{\frac{\log n}{np}}\right)\right)\frac{\sigma^2}{2p}.$$
The last error term we need to analyze is $\left|\im\left(- \frac{1}{n-1}\bar{z}_j^*\bar{a}\bar{b}(z_j-z_j^*b)\right)\right|^2$. It can simply be bounded by $\frac{1}{(n-1)^2}|z_j-z_j^*b|^2$.

\paragraph{Step 4: Combining the bounds.} Plugging all the individual error bounds obtained in Step 3 into the error decomposition in Step 2, we obtain
\begin{eqnarray*}
\ell(\wh{z},z^*) &\leq& \sum_{j=1}^n|\wh{z}_j-z_j^*ab|^2 \\
&\leq& \left(1+C_6\left(\rho+\sqrt{\gamma}+\eta+\eta^{-1}\gamma+\sqrt{\frac{\log n}{np}}\right)\right)\frac{\sigma^2}{2p} \\
&& + \frac{16\sigma^2}{\rho^2 p}\exp\left(-\frac{1}{16}\sqrt{\frac{\rho^2np}{\sigma^2}}\right)  + C_6\left(\eta^{-1}+\rho^{-2}\right)\frac{\log n+\sigma^2}{np}\ell(z,z^*).
\end{eqnarray*}
We set
$$\eta=\sqrt{\gamma+\frac{\log n+\sigma^2}{np}}\quad\text{and}\quad \rho^2=\sqrt{32}\sqrt{\frac{\log n+\sigma^2}{np}}.$$
Then, since $\frac{\rho^2np}{\sigma^2}\rightarrow\infty$, we have
$$\frac{16\sigma^2}{\rho^2 p}\exp\left(-\frac{1}{16}\sqrt{\frac{\rho^2np}{\sigma^2}}\right)\leq \frac{\sigma^2}{\rho^2 p}\left(\frac{\sigma^2}{\rho^2 np}\right)^2\leq\frac{\sigma^2}{p}\sqrt{\frac{\sigma^2}{np}}.$$
Therefore, we have
$$\ell(\wh{z},z^*)\leq \left(1+C_7\left(\gamma^2+\frac{\log n+\sigma^2}{np}\right)^{1/4}\right)\frac{\sigma^2}{2p} + C_7\sqrt{\frac{\log n+\sigma^2}{np}}\ell(z,z^*).$$
Since the above inequality is derived from the condition (\ref{eq:hollow1}), (\ref{eq:hollow2}), (\ref{eq:hollow3}), (\ref{eq:hollow4}), (\ref{eq:hollow5}), (\ref{eq:hollow6}), (\ref{eq:hollow7}) and $\ell(z,z^*)\leq \gamma n$. It holds uniformly over all $z\in\mathbb{C}_1^n$ such that $\ell(z,z^*)\leq \gamma n$ with probability at least $1-(2n)^{-1}-\exp\left(-\left(\frac{np}{\sigma^2}\right)^{1/4}\right)$. The proof is complete.

\subsection{Proof of Lemma \ref{lem:lower-van-trees}}\label{sec:pf-low}

We use the version of van Trees' inequality presented as Equation (11) of the paper \cite{gill1995applications}. We have
\begin{eqnarray}
\nonumber && \inf_{\wh{T},\wh{S}}\int \int f(a)f(c)\mathbb{E}_{(a,c)}\left[(\wh{T}-T(a,c))^2+(\wh{S}-S(a,c))^2\right] \diff a \diff c \\
\label{eq:van-tree} &\geq& \int\int \Tr(J(a,c))f(a)f(c) \diff a \diff c - I(f),
\end{eqnarray}
where $I(f)$ is the information of the prior which will be elaborated later.
The matrix $J(a,c)$ is given by $J(a,c)=AB^{-1}A$, where
$$A=\frac{\partial(T,S)}{\partial(a,c)}=\begin{pmatrix}
c - \sqrt{1-c^2}\frac{a}{\sqrt{1-a^2}} & a-\sqrt{1-a^2}\frac{c}{\sqrt{1-c^2}} \\
-\frac{ac}{\sqrt{1-a^2}} - \sqrt{1-c^2} & \sqrt{1-a^2} + \frac{ac}{\sqrt{1-c^2}}
\end{pmatrix},$$
and $B$ is the Fisher information matrix. Due to the independence between $A_{12}Y_{12}$ and $\cbr{A_{1j}Y_{1j}}_{j\geq 3}\cup \cbr{A_{2j}Y_{2j}}_{j\geq 3}$, we have $B=B_1+B_2$
 with
$$B_1=\frac{2p}{\sigma^2}\begin{pmatrix}
\left|\frac{\partial T}{\partial a}\right|^2 + \left|\frac{\partial S}{\partial a}\right|^2 & \frac{\partial T}{\partial a}\frac{\partial T}{\partial c}  \\
\frac{\partial T}{\partial a}\frac{\partial T}{\partial c} & \left|\frac{\partial T}{\partial c}\right|^2 + \left|\frac{\partial S}{\partial c}\right|^2
\end{pmatrix},$$
and
$$B_2=\frac{2(n-2)p}{\sigma^2}\begin{pmatrix}
\frac{1}{1-a^2} & 0 \\
0 & \frac{1}{1-c^2}
\end{pmatrix},$$
In the following we show how to derive $\sbr{B_2}_{11}$. Let $(\xi,\zeta)^{\T}$ be a bivariate normal random vector distributed according to (\ref{eq:cond2}). Thus, the part of the likelihood function that involves $a$ is proportional to $l(\xi,\zeta,\cbr{A_{1j}}_{j\geq 3};a)$ which is defined as
\begin{align*}
l(\xi,\zeta,\cbr{A_{1j}}_{j\geq 3};a) =  \ebr{- \frac{\br{\xi - a\sum_{j\geq 3}A_{1j}}^2 + \br{\zeta -\sqrt{1-a^2}\sum_{j\geq 3}A_{1j} }^2}{\sigma^2 \sum_{j\geq 3}A_{1j}} }.
\end{align*}
Then
\begin{align*}
\sbr{B_2}_{11} = -\E \frac{\partial^2 \log  l(\xi,\zeta,\cbr{A_{1j}}_{j\geq 3};a)}{\partial a^2} = \E \frac{2\zeta}{\sigma^2 (1-a^2)^\frac{3}{2}} = \frac{2(n-2)p}{(1-a^2)\sigma^2}.
\end{align*}
The rest of the entries of $B_1$ and $B_2$ can be calculated analogously and are omitted here. 

We then have
\begin{eqnarray*}
\Tr(J(a,c)) &\geq& \Tr(AB_2^{-1}A^{\T})-|\Tr(A((B_1+B_2)^{-1}-B_2^{-1})A^{\T})| \\
&\geq& \fnorm{B_2^{-1/2}A^{\T}}^2\left(1-\fnorm{B_2^{1/2}(B_1+B_2)^{-1}B_2^{1/2}-I_2}\right).
\end{eqnarray*}
By direct calculation, $\fnorm{B_2^{-1/2}A^{\T}}^2=\frac{2\sigma^2}{(n-2)p}$.
Therefore,
\begin{eqnarray*}
&& \int\int\Tr(J(a,c))\pi(a)\pi(c) \diff a \diff c \\
&\geq& \frac{\sigma^2}{p(n-2)}\left(1-\int\int \fnorm{B_2^{1/2}(B_1+B_2)^{-1}B_2^{1/2}-I_2}f(a)f(c) \diff a \diff c\right).
\end{eqnarray*}
Now we set $f$ to be the density function of a distribution supported on the interval $[0.4,0.6]$. We require $f$ to be bounded by a constant, smooth and vanishes and the endpoints of the interval. We also require both $f'/f$ and $f'$ to be bounded by a constant on the interval $[0.4,0.6]$. This makes $f$ satisfy the regularity condition required by \cite{gill1995applications} so that van Trees' inequality (\ref{eq:van-tree}) holds. The existence of such a nice density function $f$ is guaranteed by a construction using mollifier function. By the choice of $f$, we know that both $a$ and $c$ are bounded away from $0$ and $1$. As a result, we have $\opnorm{B_2}\lesssim \frac{np}{\sigma^2}$, $\opnorm{B_2^{-1}}\lesssim \frac{\sigma^2}{np}$ and $\opnorm{(B_1+B_2)^{-1}}\lesssim \frac{\sigma^2}{np}$. Thus,
\begin{eqnarray*}
&& \fnorm{B_2^{1/2}(B_1+B_2)^{-1}B_2^{1/2}-I_2} \\
&\leq& \opnorm{B_2}\fnorm{(B_1+B_2)^{-1}-B_2^{-1}} \\
&\leq& \opnorm{B_2}\opnorm{B_2^{-1}}\opnorm{(B_1+B_2)^{-1}}\fnorm{B_1} \\
&\lesssim& n^{-1}.
\end{eqnarray*}
Hence, we have
$$\int\int\Tr(J(a,c))f(a)f(c) \diff a \diff c \geq \frac{\sigma^2}{(n-2)p}\left(1-C_1 n^{-1}\right).$$
Finally, we need to provide an upper bound for $I(f)$. The definition of $I(f)$ is given by
$$I(f)=\int_{[0.4,0.6]^2}\frac{1}{f(\theta_1)f(\theta_2)}\sum_{j,k,l\in\{1,2\}}\left(\frac{\partial}{\partial\theta_k}K_{jk}(\theta)f(\theta_1)f(\theta_2)\right)\left(\frac{\partial}{\partial\theta_l}K_{jl}(\theta)f(\theta_1)f(\theta_2)\right)\diff\theta,$$
where $\theta=(\theta_1,\theta_2)=(a,c)$ and $K(\theta)=AB^{-1}$, which is a $2\times 2$ matrix depending on the value of $\theta=(a,c)$. By the regularity conditions satisfied by the choice of $f$, we have
$$I(f)\leq C_2\max_{\theta\in[0.4,0.6]^2} \max_{j,k\in\{1,2\}}\left|\frac{\partial}{\partial\theta_k}K_{jk}(\theta)\right|^2.$$
The above bound can be explicitly calculated via the definitions of $A$, $B_1$ and $B_2$, but we omit the tedious details. Intuitively, the contribution of $B_1$ is negligible compared with that of $B_2$, and the contribution of $B_2$ is of order $np/\sigma^2$. Thus, $I(f)\leq C_3\left(\frac{\sigma^2}{np}\right)^2$. This leads to the lower bound
$$\inf_{\wh{T}}\int \int \pi(z_j)\pi(z_k)\mathbb{E}_z|\wh{T}-z_j\bar{z}_k|^2\diff z_j\diff z_k\geq \frac{\sigma^2}{np}\left(1-C\left(\frac{1}{n}+\frac{\sigma^2}{np}\right)\right).$$
The desired result is obtained by plugging the above lower bound to (\ref{eq:lamp}). This completes the proof.

\subsection{Proofs of Lemma \ref{lem:spec-ini} and Lemma \ref{lem:MLE-ini}}\label{sec:pf-ini}

The two lemmas can be proved via the same argument, and thus we present the proofs together. For any $z\in\mathbb{C}^n$ such that $\|z\|^2=n$, we have
$$\fnorm{p^{-1}A\circ Y -zz^{\H}}^2=\fnorm{p^{-1}A\circ Y}^2+n^2-2p^{-1}z^{\H}(A\circ Y)z.$$
Therefore, $\min_{\|z\|^2=n}\fnorm{p^{-1}A\circ Y -zz^{\H}}^2$ is equivalent to $\max_{\|z\|^2=n}z^{\H}(A\circ Y)z$. We can thus write $z^{(0)}$ as $z_j^{(0)}=\frac{\wh{z}_j}{|\wh{z}_j|}$, where
\begin{equation}
\wh{z}=\argmin_{\|z\|^2=n}\fnorm{p^{-1}A\circ Y -zz^{\H}}^2. \label{eq:OLS-spec}
\end{equation}
On the other hand, for any $z\in\mathbb{C}_1^n$,
\begin{eqnarray*}
&& \sum_{1\leq j< k\leq n}A_{jk}|Y_{jk}-z_j\bar{z}_k|^2 \\
&=& \sum_{1\leq j<k\leq n}A_{jk}(|Y_{jk}|^2+1) - \sum_{1\leq j<k\leq n}A_{jk}Y_{jk}(z_j\bar{z}_k+\bar{z}_jz_k) \\
&=& \sum_{1\leq j<k\leq n}A_{jk}(|Y_{jk}|^2+1) - z^{\H}(A\circ Y)z \\
&=& \sum_{1\leq j<k\leq n}A_{jk}(|Y_{jk}|^2+1)  - \frac{p}{2}\left(\fnorm{p^{-1}A\circ Y}^2+n^2-\fnorm{p^{-1}A\circ Y -zz^{\H}}^2\right).
\end{eqnarray*}
Therefore, $\min_{z\in\mathbb{C}_1^n}\sum_{1\leq j< k\leq n}A_{jk}|Y_{jk}-z_j\bar{z}_k|^2$ is equivalent to $\min_{z\in\mathbb{C}_1^n}\fnorm{p^{-1}A\circ Y -zz^{\H}}^2$, and the MLE can be equivalently written as
\begin{equation}
\wh{z}=\argmin_{z\in\mathbb{C}_1^n}\fnorm{p^{-1}A\circ Y -zz^{\H}}^2. \label{eq:OLS-MLE}
\end{equation}
Now we analyze (\ref{eq:OLS-spec}) and (\ref{eq:OLS-MLE}) simultaneously.  For $\wh{z}$ that is either (\ref{eq:OLS-spec}) or (\ref{eq:OLS-MLE}), we have
$$\fnorm{p^{-1}A\circ Y -\wh{z}\wh{z}^{\H}}^2 \leq \fnorm{p^{-1}A\circ Y -z^*z^{*\H}}^2.$$
Rearranging the above inequality, we have
$$\fnorm{\wh{z}\wh{z}^{\H}-z^*z^{*\H}}^2\leq 2\left|\Tr\left((\wh{z}\wh{z}^{\H}-z^*z^{*\H})(p^{-1}A\circ Y -z^*z^{*\H})\right)\right|,$$
which implies
\begin{equation}
\fnorm{\wh{z}\wh{z}^{\H}-z^*z^{*\H}}\leq 2\left|\Tr\left(\left(\frac{\wh{z}\wh{z}^{\H}-z^*z^{*\H}}{\fnorm{\wh{z}\wh{z}^{\H}-z^*z^{*\H}}}\right)(p^{-1}A\circ Y -z^*z^{*\H})\right)\right|. \label{eq:airpods}
\end{equation}
Since $\frac{\wh{z}\wh{z}^{\H}-z^*z^{*\H}}{\fnorm{\wh{z}\wh{z}^{\H}-z^*z^{*\H}}}$ is a Hermitian matrix of rank at most two, it has the spectral decomposition
$$\frac{\wh{z}\wh{z}^{\H}-z^*z^{*\H}}{\fnorm{\wh{z}\wh{z}^{\H}-z^*z^{*\H}}}=\lambda_1 uu^{\H} + \lambda_2 vv^{\H},$$
where $u,v$ are complex unit vectors orthogonal to each other and $\lambda_1,\lambda_2$ are real and satisfy $\lambda_1^2+\lambda_2^2=1$. Then, we bound the right hand side of (\ref{eq:airpods}) by
\begin{eqnarray*}
&& 2|\lambda_1|\left|u^{\H}(p^{-1}A\circ Y -z^*z^{*\H})u\right| + 2|\lambda_2|\left|v^{\H}(p^{-1}A\circ Y -z^*z^{*\H})v\right| \\
&\leq& 2(|\lambda_1|+|\lambda_2|)\opnorm{p^{-1}A\circ Y -z^*z^{*\H}} \\
&\leq& 2\sqrt{2}\opnorm{p^{-1}A\circ Y -z^*z^{*\H}}.
\end{eqnarray*}
Therefore,
\begin{eqnarray*}
\fnorm{\wh{z}\wh{z}^{\H}-z^*z^{*\H}} &\leq& 4\sqrt{2}\opnorm{p^{-1}A\circ Y -z^*z^{*\H}} \\
&\leq& \frac{1}{p}\opnorm{(A-\mathbb{E}A)\circ z^*z^{*\H}} + \frac{\sigma}{p}\opnorm{A\circ W}.
\end{eqnarray*}
By Lemma \ref{lem:ER-graph},
\begin{eqnarray*}
\opnorm{(A-\mathbb{E}A)\circ z^*z^{*\H}} &=& \sup_{\|u\|=1}\left|\sum_{1\leq j\neq k \leq n}(A_{jk}-p)z_j^*\bar{z}^*_ku_j\bar{u}_k\right| \\
&\leq& \opnorm{A-\mathbb{E}A} \\
&\leq& C_1\sqrt{np},
\end{eqnarray*}
with probability at least $1-n^{-10}$.
By Lemma \ref{lem:bandeira}, $\opnorm{A\circ W}\leq C_2\sqrt{np}$ with probability at least $1-n^{-10}$. Thus,
$$
\fnorm{\wh{z}\wh{z}^{\H}-z^*z^{*\H}}^2 \leq C_3\frac{n(\sigma^2+1)}{p},
$$
with probability at least $1-2n^{-10}$. For the MLE $\wh{z}$, it satisfies $\wh{z}\in\mathbb{C}_1^n$, and thus we can use Lemma \ref{lem:loss-equiv} and obtain the bound
$$\ell(\wh{z},z^*)\leq \frac{1}{n}\fnorm{\wh{z}\wh{z}^{\H}-z^*z^{*\H}}^2 \leq C_3\frac{\sigma^2+1}{p},$$
with probability at least $1-2n^{-10}$. For $z^{(0)}$, its definition implies that
$$z_j^{(0)}\bar{z}_k^{(0)}=\frac{\wh{z}_j\bar{\wh{z}}_k}{|\wh{z}_j\bar{\wh{z}}_k|},$$
and therefore, we have
$$|z_j^{(0)}\bar{z}_k^{(0)}-z_j^*\bar{z}_k^*|\leq 2|\wh{z}_j\bar{\wh{z}}_k-z_j^*\bar{z}_k^*|,$$
by Lemma \ref{lem:trivial}.
Use this inequality, and we have
$$\fnorm{z^{(0)}z^{(0)\H}-z^*z^{*\H}} \leq 2\fnorm{\wh{z}\wh{z}^{\H}-z^*z^{*\H}},$$
and thus $\fnorm{z^{(0)}z^{(0)\H}-z^*z^{*\H}}^2\leq C_4\frac{n(\sigma^2+1)}{p}$ with probability at least $1-2n^{-10}$. By Lemma \ref{lem:loss-equiv}, we have
$$\ell(z^{(0)},z^*)\leq \frac{1}{n}\fnorm{z^{(0)}z^{(0)\H}-z^*z^{*\H}}^2 \leq C_4\frac{\sigma^2+1}{p},$$
with probability at least $1-2n^{-10}$. The proof is complete.

\bibliographystyle{dcu}
\bibliography{reference}

\end{document}